\newtheorem{theorem}{Theorem}[section]
\newtheorem{proposition}[theorem]{Proposition}
\newtheorem{corollary}[theorem]{Corollary}
\newtheorem{conjecture}[theorem]{Conjecture}
\newtheorem{lemma}[theorem]{Lemma}
\theoremstyle{definition}
\newtheorem{definition}[theorem]{Definition}
\newtheorem{example}[theorem]{Example}
\theoremstyle{remark}
\newtheorem{remark}[theorem]{Remark}
\newcommand{\la}{\lambda}
\newcommand{\Mm}{\mathcal{M}}
\newcommand{\Si}{\Sigma}
\def\RR{\mathbb{R}}
\def\ZZ{\mathbb{Z}}
\def\TT{\mathbb{T}}
\newcommand{\cE}{{\mathcal E}}
\newcommand{\cL}{{\mathcal L}}
\newcommand{\cN}{{\mathcal N}}
\newcommand\minus\backslash
\newcommand{\abs}[1]{\left|#1\right|}
\newcommand\lan\langle
\newcommand\ran\rangle
\newcommand{\e}{{\mathrm e}}
\DeclareMathOperator\Div{div}
\DeclareMathOperator\Vol{Vol}
\newcommand{\norm}[1]{\left\lVert#1\right\rVert}
\DeclareMathOperator{\tr}{Tr}
\renewcommand\leq\leqslant
\renewcommand\geq\geqslant
\newlength{\intwidth}
\numberwithin{equation}{section}
\newcommand\BSi\Si 
\newcommand{\dif}{\mathrm{d}}
\newcommand{\Tt}{\mathbb{T}}
\begin{document}

\title[Critical compatible metrics]{On the existence of critical compatible metrics on contact $3$-manifolds}

\author{Y. Mitsumatsu}
\address{Department of Mathematics, Chuo University, 1-13-27 Kasuga Bunkyo-Ku, Tokyo, 112-8551, Japan}
\email{yoshi@math.chuo-u.ac.jp}

\author{D. Peralta-Salas} \address{Instituto de Ciencias Matem\'aticas, Consejo Superior de Investigaciones Cient\'\i ficas, 28049 Madrid, Spain}
\email{dperalta@icmat.es}

\author{R. Slobodeanu}
\address{Faculty of Physics, University of Bucharest, P.O. Box Mg-11, RO--077125 Bucharest-M\u agurele, Romania}
\email{radualexandru.slobodeanu@g.unibuc.ro}

\subjclass[2020]{37D20, 53C15, 53C25, 53D10, 53E50}
%
%
\begin{abstract}
We disprove the generalized Chern-Hamilton conjecture on the existence of critical compatible metrics on contact $3$-manifolds. More precisely, we show that a contact $3$-manifold $(M,\alpha)$ admits a critical compatible metric for the Chern-Hamilton energy functional if and only if it is Sasakian or its associated Reeb flow is $C^\infty$-conjugate to an algebraic Anosov flow modeled
on $\widetilde{SL}(2, \mathbb R)$. In particular, this yields a complete topological classification of compact $3$-manifolds that admit critical compatible metrics. As a corollary we prove that no contact structure on $\TT^3$ admits a critical compatible metric and that critical compatible metrics can only occur when the contact structure is tight.
\end{abstract}

\maketitle

\section{Introduction}

Contact geometry is a vast area of research with a number of important open problems. Different directions of research that have attracted the attention in recent years include the study of global properties of contact structures and its implications for low-dimensional topology, the dynamics of the associated Reeb flows (e.g. the existence of periodic orbits), or applications in mathematical physics (thermodynamics, hydrodynamics, ...). Much less studied are the relations between Riemannian geometry and contact structures, which were first considered by Sasaki and others in the 1960s (see~\cite{blair} and references therein), and then by Chern and Hamilton who proposed an important variational problem in a classical paper~\cite{ch}. They introduced the notion of metrics adapted to contact manifolds. A slightly more general definition of a metric compatible with a contact form was presented by Etnyre, Komendarczyk and Massot in~\cite{ekm}, so let us state it precisely:
\begin{definition}\label{defcomp}
Let $(M,\alpha)$ be a contact $3$-manifold. A Riemannian metric $g$ on $M$ is called \emph{compatible} with $\alpha$ if $\abs{\alpha}_g=1$ and there exists a constant $\theta>0$ such that
$$\ast \dif \alpha  = \theta \alpha\,,$$
where $\ast$ is the Hodge star operator computed with the metric $g$. We say that a contact structure $\zeta$ and a metric $g$ are compatible if there is a defining contact form $\alpha$ for $\zeta$ that is compatible with $g$. It is easy to check that the Riemannian volume defined by $g$ is given by $\mathrm{vol}_g=\frac{1}{\theta}\alpha\wedge \dif\alpha$. All along this article we assume that compatible metrics are $C^\infty$.
\end{definition}

Chern-Hamilton's definition of adapted metric~\cite{ch} corresponds to a compatible metric with $\theta=2$. Furthermore, it coincides with the notion of \emph{associated metric}, customarily used in contact geometry~\cite{blair}. For a good account on the subject, the reader may consult Blair's monograph~\cite{blair}, and the recent articles~\cite{rafal,ekm,ekm2} relating the geometric properties of compatible metrics with purely contact topological information (tightness or overtwistedness).

Given a contact form, it is clear that there is an abundance of compatible metrics, obtained using the polar decomposition of non-singular skew-symmetric matrices, see e.g.~\cite{blairI}. To pinpoint the ``best'' or ``more optimal'' metric, Chern and Hamilton proposed in~\cite{ch} the criterion of minimizing the $L^2$-norm of the torsion $\tau:=\mathcal{L}_R g$, where $R$ is the Reeb field associated to the contact form $\alpha$ (see~\cite[Chapter 10]{blair} for a summary of different proposals of optimal compatible metrics). More precisely, for each constant $\theta>0$, a \emph{critical compatible metric} is defined as a critical point of the \textit{Chern-Hamilton energy functional} $E(g)$ defined on the space $\Mm_\theta(\alpha)$ of $C^\infty$ metrics compatible with $\alpha$ and constant factor $\theta$:
\begin{equation}\label{energy}
E : \Mm_\theta(\alpha) \to [0,\infty), \qquad E(g) = \int_M \abs{\tau}^2_g \mathrm{vol}_g\,.
\end{equation}
A remarkable property is that, if they exist, critical compatible metrics are always (local) minima of $E$, cf.~\cite{deng}, so they can be understood as (locally) optimal metrics for the aforementioned energy functional. As an aside remark, we observe that the specific value of $\theta>0$ is irrelevant for the problem of the existence of critical compatible metrics, see Remark~\ref{R:fixte} below.

Examples of critical compatible metrics include Sasakian metrics (i.e., compatible metrics for which $R$ is a Killing field), for which the energy functional is zero (a global minimum), the standard metric on the tangent sphere bundle of a Riemannian manifold of constant curvature $-1$, cf. \cite{blairIII}, and certain left invariant metrics on compact quotients of $\widetilde{SL}(2, \RR)$, cf. \cite{perr}; interestingly enough in all these cases the energy density $\abs{\tau}^2_g$ is constant. We show in our main theorem that these are essentially all possible critical compatible metrics.

In this article we are concerned with the existence of critical compatible metrics for \emph{general contact manifolds}, which is motivated by an old conjecture of Chern and Hamilton~\cite{ch} stating that, on a closed contact 3-manifold $(M,\alpha)$ whose corresponding Reeb vector field induces a Seifert foliation, there always exists a critical compatible metric. Even taking into account that the Euler-Lagrange equations that they derived turned out to be flawed and the error was lately corrected by Tanno~\cite{tan}, the conjecture maintained its interest and eventually it was solved in the affirmative in~\cite{ruk}. It is interesting to notice that, virtually at the same time and independently, Blair proposed in~\cite{blairI} an equivalent energy functional and in the same article he proved that the only critical compatible metrics are Sasakian if the Reeb flow is regular (i.e., if $M$ is a circle bundle and the Reeb field is tangent to the fibres). Besides the affirmative solution of the Chern-Hamilton conjecture, a number of references have focused on the identification of non-trivial critical metrics on specific contact manifolds and their stability. Recently, the interest in the conjecture has been revived by the works of Perrone and Hozoori~\cite{perr,hozo,perro,hozo1} who also established the connection with Anosovity of the associated Reeb fields. In this direction, Hozoori formulated in~\cite{hozo} a natural extension of the original Chern-Hamilton conjecture letting the contact form to be arbitrary, but associated with a fixed contact distribution:

\begin{conjecture}[Generalized Chern-Hamilton conjecture] \label{conjectgeneral}
For any closed contact 3-manifold $(M, \zeta)$, there exists a compatible metric that realizes the minimum (among compatible metrics) of the Chern-Hamilton energy functional.
\end{conjecture}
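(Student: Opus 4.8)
The plan is to settle the conjecture by the direct method of the calculus of variations, minimizing $E$ over the full space of metrics compatible with the contact structure $\zeta$. Since by Remark~\ref{R:fixte} the factor $\theta$ plays no role in the existence question, I would first set up a concrete parametrization of this space. A compatible metric is equivalent to the data of a defining contact form $\alpha$ for $\zeta$ — i.e. a positive rescaling $\alpha=f\alpha_0$ of a reference form — together with an almost complex structure $J$ on the plane field $\zeta=\ker\alpha$ calibrated by $d\alpha$: indeed $|\alpha|_g=1$ and $\ast d\alpha=\theta\alpha$ force $R$ to be the unit normal to $\zeta$ and $d\alpha|_\zeta$ to be a multiple of the area form of $g|_\zeta$, so that $g$ is recovered from $(\alpha,J)$ via $g=\alpha\otimes\alpha+d\alpha(\cdot,J\cdot)$. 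At each point the admissible $J$ range over the space of $d\alpha_p$-calibrated complex structures on $\zeta_p\cong\RR^2$, which is a copy of the hyperbolic plane $\mathbb{H}$. Thus the competitor set is identified with pairs $(f,J)$, where $J$ is a section of an $\mathbb{H}$-bundle over $M$ — a target with two non-compact directions, the conformal factor $f>0$ and the hyperbolic fibre.

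The second step is to rewrite the energy in these variables and to derive its Euler–Lagrange equations. Because $\mathcal{L}_R\alpha=0$, the torsion $\tau=\mathcal{L}_R g$ is supported on $\zeta$ and records precisely the rate at which $J$ is dragged by the Reeb flow, so that $E$ depends on $(f,J)$ only through a first-order derivative of $J$ along $R$. Varying $J$ inside the fibre $\mathbb{H}$ and $f$ among positive functions, I would compute $\frac{d}{ds}E$ and extract the critical-point equations, cross-checking them against Tanno's corrected formulation~\cite{tan}. A useful structural guide is that the known critical metrics listed above — Sasakian, the tangent sphere bundle of a hyperbolic surface, and the left-invariant examples on quotients of $\widetilde{SL}(2,\RR)$ — all have \emph{constant} energy density $|\tau|^2_g$; I would try to show that the Euler–Lagrange system, combined with an integration by parts along the volume-preserving Reeb flow, forces this rigidity at any critical point, reducing the problem to a transversally invariant equation for $J$.

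To produce a minimizer I would then run the direct method: take a minimizing sequence $(f_n,J_n)$, use the uniform energy bound to control the $R$-derivatives of $J_n$, and attempt to extract a smooth compatible limit, upgrading its regularity by elliptic/parabolic estimates along the flow. I expect this compactness step to be the decisive obstacle. The target offers no compactness: a minimizing sequence can send $J_n$ to the ideal boundary of the hyperbolic fibre — the transverse conformal structure degenerating while the energy stays bounded — or let $f_n$ run off to $0$ or $\infty$, and whether either kind of escape can be excluded is dictated entirely by the global dynamics of the Reeb flow rather than by any a priori estimate. For a Killing Reeb field the constant section is already a zero-energy minimizer, but for a generic Anosov-type flow I anticipate no barrier preventing the transverse structure from being sheared off to infinity along the expanding and contracting directions. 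My honest expectation, therefore, is that the direct method does \emph{not} close in general: the infimum should be attained only when the Reeb dynamics is rigid enough to confine the minimizing sequence — singling out exactly the Sasakian structures and the algebraic Anosov flows modeled on $\widetilde{SL}(2,\RR)$ — so that the conjecture as stated should in fact be false.
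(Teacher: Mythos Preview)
Your final prediction is exactly right --- the conjecture is false, and the obstruction is precisely the dichotomy you name (Sasakian or algebraic Anosov on a quotient of $\widetilde{SL}(2,\RR)$). But the proposal as written is not a proof of anything: you set up the direct method, correctly identify that the competitor space has two noncompact directions, and then \emph{conjecture} that compactness fails for generic Reeb dynamics. Saying ``I anticipate no barrier preventing the transverse structure from being sheared off to infinity'' does not disprove the conjecture; to show the infimum is not attained on, say, $\TT^3$, you would need either an explicit minimizing sequence with no smooth limit or a proof that no critical point exists. Your outline supplies neither.

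The paper takes the second route and bypasses minimizing sequences entirely. It works directly with Tanno's Euler--Lagrange equation $\nabla_R h = 2h\phi$ for critical compatible metrics. The step you allude to vaguely (``integration by parts along the volume-preserving Reeb flow forces this rigidity'') is made precise as the key lemma: at any critical metric, the function $\lambda^2 = \tfrac{1}{8}|\mathcal{L}_R g|_g^2$ is a first integral of the Reeb flow. If $\lambda^2\equiv 0$ the metric is Sasakian. If $\lambda^2>0$ everywhere, one builds from the eigenframe of $h$ an explicit $C^\infty$ bi-contact structure supporting $R$; this forces $R$ to be Anosov, hence $\lambda^2$ is constant (Anosov flows have no nontrivial first integrals), and a Lie-algebra computation shows $R$ is $C^\infty$-conjugate to an algebraic model on $\widetilde{SL}(2,\RR)$. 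The mixed case --- $\lambda^2$ nonconstant with nonempty zero set --- is ruled out by a local conformally-Anosov argument on a toral neighborhood of a regular level set, which contradicts volume preservation. The upshot is a topological obstruction (e.g.\ $\TT^3$ admits no critical compatible metric), which disproves the conjecture. None of this machinery --- the first-integral lemma, the bi-contact construction, the exclusion of the mixed case --- appears in your sketch, and the direct-method framing does not naturally lead to it.
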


In this article we disprove this conjecture by showing that a necessary (and sufficient) condition for the existence of a critical compatible metric is that it is Sasakian or the associated Reeb flow is algebraic Anosov; in particular, this yields a topological classification of $3$-manifolds admitting critical compatible metrics, and hence a wealth of manifolds for which no contact structure admits a critical compatible metric. This is precisely stated in the following main theorem and its remark. In the statement we use a special class of bicontact structures which is defined in Section~\ref{S.proof}.

\begin{theorem}\label{T.main}
A closed contact $3$-manifold $(M,\alpha)$ admits
a critical compatible metric $g$ if and only if:
\begin{itemize}
\item It supports a Sasakian metric, or
\item Its associated Reeb field is an Anosov flow
which is supported by a calibrated bicontact structure.
This is equivalent to the Anosov flow being $C^\infty$-conjugate
to one of the algebraic Anosov flows modeled
on $\widetilde{SL}(2, \mathbb R)$, and $M$ diffeomorphic to a compact quotient of $\widetilde{SL}(2, \mathbb R)$.
\end{itemize}
In the former case $\mathcal L_Rg=0$ and in the latter the norm of $\mathcal L_R g$ is constant on~$M$. Moreover, any critical compatible metric $g$ is a global minimizer of the Chern-Hamilton energy functional.
\end{theorem}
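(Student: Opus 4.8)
The plan is to treat the two implications separately, doing the variational analysis first and the dynamical rigidity second. I would begin by fixing $\theta$ (justified by Remark~\ref{R:fixte}) and setting up the first variation of $E$ over $\Mm_\theta(\alpha)$. A convenient parametrization writes a compatible metric in an adapted orthonormal frame $\{R,e_1,e_2\}$, where $\xi=\ker\alpha$ carries the endomorphism $\phi$ determined by $g$ and $\dif\alpha$, and encodes an admissible deformation of $g$ by a symmetric, trace-free, $\phi$-anticommuting endomorphism of $\xi$ (the tangent space to the space of compatible complex structures). Since $R$ preserves both $\alpha$ and $\mathrm{vol}_g$, the torsion $\tau=\cL_R g$ vanishes in the $R$-direction and restricts to a symmetric trace-free form on $\xi$; pointwise it therefore has eigenvalues $\pm\mu$ with $\mu\geq0$, and $|\tau|^2_g$ is a constant multiple of $\mu^2$. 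The first task is to derive the Euler--Lagrange equations, for which I would follow Tanno's corrected computation~\cite{tan}, obtaining a tensorial identity relating $\nabla_R\tau$, the torsion itself, and the structure tensors $\alpha,\phi$.

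The crux of the necessity direction is to extract from this identity the rigid dichotomy. I expect to show, by pairing the critical equation with a suitable test deformation and using a divergence / integration-by-parts argument on the closed manifold $M$, that the energy density $\mu^2$ is forced to be constant. This splits the problem into two cases: $\mu\equiv0$, where $\cL_Rg=0$ and $g$ is Sasakian; and $\mu\equiv c>0$. In the latter case the two eigendistributions of $\tau$ on $\xi$ are smooth line fields $\ell_1,\ell_2$ (eigenvectors of a now nowhere-degenerate smooth symmetric tensor), and one computes that under the Reeb flow the metric expands along one eigendirection and contracts along the other at the uniform exponential rate governed by $c$. The point is that the criticality constraints supply exactly the algebraic relations needed for $\RR R\oplus\ell_1$ and $\RR R\oplus\ell_2$ to be the weak unstable and weak stable planes of an Anosov flow; since these planes are $C^\infty$, the Reeb flow is Anosov with a \emph{smooth} invariant splitting, and the two planes define a bi-contact structure that $g$ calibrates in the sense of Section~\ref{S.proof}.

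With smoothness of the invariant foliations in hand, I would invoke the rigidity theory of Anosov flows in dimension three. By Ghys's theorem, a $3$-dimensional Anosov flow whose weak stable and weak unstable foliations are $C^\infty$ is, up to $C^\infty$ conjugacy and time change, either a suspension of a hyperbolic automorphism of $\TT^2$ or (a finite quotient of) a geodesic flow on a hyperbolic surface, i.e.\ an algebraic flow modeled on $\TT^3$ or on $\widetilde{SL}(2,\RR)$. The suspension case is excluded because its weak foliations cannot both be contact, so it carries no calibrated bi-contact structure, leaving only the $\widetilde{SL}(2,\RR)$ family and forcing $M$ to be a compact quotient of $\widetilde{SL}(2,\RR)$; the calibration condition also pins down the time parametrization, upgrading the conjugacy to a conjugacy of the Reeb flow itself. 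This step, bridging the pointwise variational rigidity to the global smooth conjugacy, is the main obstacle: the delicate part is verifying that criticality really yields \emph{uniform} hyperbolicity together with $C^\infty$ regularity of the splitting (so that Ghys's theorem applies), and then that the calibration genuinely rules out the suspension model.

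For sufficiency I would argue in reverse. Sasakian metrics have $\tau\equiv0$, hence $E\equiv0$ attains the global minimum and is trivially critical. For the $\widetilde{SL}(2,\RR)$ models, the left-invariant compatible metrics of~\cite{perr} have constant $|\tau|^2_g$, and one checks directly that they solve the Euler--Lagrange equations. Finally, to upgrade ``critical'' to ``global minimizer'' I would establish a sharp lower bound for $E$ on $\Mm_\theta(\alpha)$ in terms of a dynamical/topological invariant of the Reeb flow, saturated precisely by the constant-density metrics; combined with the fact that critical compatible metrics are automatically local minima (cf.~\cite{deng}) and with the classification just obtained, this forces every critical metric to realize the infimum of $E$.
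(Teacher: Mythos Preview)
Your outline has the right architecture, but the step you describe as ``by pairing the critical equation with a suitable test deformation and using a divergence / integration-by-parts argument \ldots\ the energy density $\mu^2$ is forced to be constant'' is precisely where the real difficulty lives, and your plan does not address it. The critical equation \eqref{critmetric} only yields, after contracting with $h$, that $R(\lambda^2)=0$, i.e.\ that $\lambda^2$ is a \emph{first integral} of the Reeb flow (this is Lemma~\ref{1stintegral}); no global integration-by-parts argument is known that upgrades this pointwise invariance to global constancy. The paper's proof then proceeds by a genuine case split: if $\lambda^2>0$ everywhere, one first shows the Reeb flow is Anosov (via Lemma~\ref{bi-contactsup} and the bi-contact/conformally Anosov machinery of~\cite{mitsu,hozo1}), and only \emph{then} uses that Anosov flows admit no non-constant first integrals to conclude constancy. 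The delicate case is when $\lambda^2$ is neither identically zero nor everywhere positive: here the paper finds a regular toral level set, builds a thickened torus $N\cong\TT^2\times[c-\delta,c+\delta]$ foliated by level sets, applies Lemma~\ref{bi-contactsup} on $N$, and then invokes a local result (Lemma~\ref{localm}) asserting that a bi-contact-supported flow tangent to $\partial N$ is conformally Anosov but cannot preserve a volume---contradicting the volume-preservation of $R$. Your IBP sketch contains none of this mechanism, and without it the dichotomy ``$\mu\equiv 0$ or $\mu\equiv c>0$'' is simply not established.

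A second, lesser divergence: once $\lambda$ is known to be a positive constant, you propose to invoke Ghys's smooth-foliation rigidity theorem, rule out the suspension model, and then upgrade the resulting orbit-equivalence to an actual flow conjugacy via the calibration. This can be made to work, but it is more indirect than what the paper does: from \eqref{bic1} one gets $[R,e_s]=\lambda e_s$, $[R,e_u]=-\lambda e_u$ directly, and a short flow argument (as in~\cite{Green}) forces $[e_s,e_u]=2R$, so $\{R,e_s,e_u\}$ is a global $C^\infty$ frame closing up to an $\mathfrak{sl}(2,\RR)$ Lie algebra. This gives the $C^\infty$-\emph{conjugacy} (not merely orbit-equivalence) to the algebraic model in one stroke, with no appeal to Ghys and no separate argument to exclude suspensions or pin down the time parametrization. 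Finally, your plan for the global-minimum statement (``a sharp lower bound in terms of a dynamical/topological invariant'') is vague; the paper simply writes a general compatible metric as $\widetilde g=\alpha\otimes\alpha+p\,\eta_u\otimes\eta_u+r(\eta_u\otimes\eta_s+\eta_s\otimes\eta_u)+q\,\eta_s\otimes\eta_s$ with $pq-r^2=1$, computes $|\cL_R\widetilde g|^2_{\widetilde g}$ explicitly, and completes the square to show $E(\widetilde g)\geq E(g)$.
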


\begin{remark}\label{detailsMain}
The manifold in the above theorem carries one of the 8 geometries
in the sense of Thurston. In the Sasakian case, according to the classification in~\cite{geigeSasa}, the manifold is Seifert fibred and admits an $\mathbb{S}^3$-geometry, a $\mathrm{Nil}^3$-geometry or an $\widetilde{SL}(2, \mathbb R)$-geometry, and the structures are left invariant.  In the Anosov case, the manifold admits an $\widetilde{\mathit{SL}}(2, \mathbb R)$-geometry,
which might not be left-invariant, but up to double covering it is a quotient of $\widetilde{\mathit{SL}}(2, \mathbb R)$ by a right action of a cocompact discrete subgroup of $\widetilde{\mathit{SL}}(2, \mathbb R)$. A 3-manifold with left invariant $\widetilde{\mathit{SL}}(2, \mathbb R)$-geometry
admits two different kinds of critical compatible metrics. Therefore, Theorem~\ref{T.main} yields a complete topological characterization of contact $3$-manifolds admitting critical compatible metrics. In particular, no contact structure on $\TT^3$ admits a critical compatible metric and no overtwisted contact structure admits a critical compatible metric.
\end{remark}




The proof of Theorem~\ref{T.main} is presented in Section~\ref{S.proof}. Before the proof of the main theorem, we prove some instrumental lemmas in Section~\ref{S.prelim}. Finally, corollaries and other applications are presented in Section~\ref{S.appli}.

\section{Preliminary results}\label{S.prelim}
In this section we recall some useful properties of contact $3$-manifolds endowed with a compatible metric, and prove two auxiliary lemmas on critical compatible metrics that are instrumental in the proof of Theorem~\ref{T.main}.

Let $M$ be an oriented and closed smooth $3$-manifold. A contact form on $M$ is a $1$-form $\alpha$ such that $\alpha \wedge \dif \alpha \neq 0$ on $M$. A $2$-plane field $\zeta \subset TM$ is a (coorientable) contact structure on $M$ if there is a contact form $\alpha$ such that $\zeta=\ker \alpha$. Such a $2$-plane field is a maximally non-integrable distribution on $M$, also called a contact distribution. The Reeb field $R$ associated to the contact form $\alpha$ is the unique vector field determined by the conditions
\[
\alpha(R)=1\,,\qquad i_R\dif\alpha=0\,.
\]
Notice that any Reeb field preserves the volume form $\alpha \wedge \dif \alpha$ on $M$.

Given a contact $3$-manifold $(M,\alpha)$, we consider the space of compatible metrics as introduced in Definition~\ref{defcomp}. The following remark shows that we can fix the constant $\theta$ without any loss of generality:

\begin{remark}[Fixing $\theta$]\label{R:fixte}
Let $g$ be a compatible metric with constant $\theta$. Then~\cite[Proposition 2.1]{ekm} the Reeb field $R$ is $g$-orthogonal to $\ker \alpha=\zeta$ and $g(R,R)=1$, so that the metric has the splitting
$$g=g^\zeta + \alpha\otimes\alpha\,,$$
where $g^\zeta$ is a smooth degenerate quadratic form on $TM$ satisfying $g^\zeta(R,\cdot)=0$. It is easy to check that the new metric
$$g' := \frac{\theta}{\theta^\prime}g^\zeta + \alpha\otimes\alpha$$ is still a metric compatible with $\alpha$, but with constant $\theta^\prime$. Moreover we can easily infer that the Chern-Hamilton energy functional is given by
$$\theta'E(g')= \theta E(g)\,.$$
Accordingly, if we normalize the energy~\eqref{energy} by multiplying it with the constant $\theta$ corresponding to $g$, then it does not depend anymore on $\theta$ and we can restrict our variational problem (for which $\theta$ is fixed) to compatible metrics with $\theta =2$, as in the classical definition of Chern and Hamilton~\cite{ch}.
\end{remark}

In view of this remark, since the existence of critical compatible metrics for some constant $\theta_0$ implies the existence for any other constant $\theta$, throughout the rest of the paper we shall work with compatible metrics whose factor is $\theta = 2$.

On a contact manifold $(M, \alpha)$, it is well known~\cite[Theorem 4.4]{blair} that there exists an endomorphism $\phi$ of $TM$ (a $C^\infty$ $(1,1)$-tensor) and a Riemannian metric $g$ such that, for any vector fields $X,Y$ on $M$,
\begin{equation}\label{assocmet}
\phi^2= - I + \alpha \otimes R\,, \quad
g(\phi X, \phi Y)=g(X, Y) - \alpha(X)\alpha(Y)\,, \quad
\tfrac{1}{2}\dif \alpha (X, Y) = g(X, \phi Y)\,.
\end{equation}
One can easily prove that $\abs{\alpha}_g=1$, $g(R, X)=\alpha(X)$ and $\tfrac{1}{2}\alpha \wedge \dif \alpha= \mathrm{vol}_g$, and therefore we have $\ast \dif \alpha = 2 \alpha$, so $g$ is a compatible metric for $\alpha$. In fact, this provides a method to construct a compatible metric by starting with a $(1,1)$-tensor $\phi$ that satisfies $\phi^2= - I + \alpha \otimes R$ (so actually a complex structure on the contact planes, extended along the Reeb field direction by $\phi R =0$) and which is compatible with $d\alpha$, i.e.,
\begin{equation}\label{phi2}
d\alpha(\phi X, \phi Y)=d\alpha(X, Y), \quad d\alpha(\phi X, X) >0\,,
\quad X, Y \in \zeta=\ker \alpha,
\end{equation}
by setting
\begin{equation}\label{defcompatmetric}
g(X,Y):=\tfrac{1}{2}\dif \alpha(\phi X, Y) + \alpha(X) \alpha(Y)\,,
\end{equation}
for any vector fields $X,Y$ on $M$; see also~\cite[Proposition 2.1]{ekm}. Conversely, given any compatible metric $g$ there is a $C^\infty$ associated $\phi$ that satisfies the conditions~\eqref{assocmet}; such a $(1,1)$-tensor $\phi$ is not unique, but its freedom will not be relevant for our purposes.

We observe that, with respect to the compatible metric $g$, $R$ is geodesic (that is, $\nabla_R R =0$) and solenoidal (that is, $\mathcal L_R\mathrm{vol}_g=0$).

Let $g$ be a metric compatible with $(M,\alpha)$ and $\phi$ an associated $(1,1)$-tensor, then we introduce two important objects in the analysis of the existence of critical compatible metrics: the $(1,1)$-tensor
$$h:=\frac{1}{2}\mathcal{L}_R \phi\,,$$
which anti-commutes with $\phi$, that is $h\phi + \phi h=0$ (as we can easily see by applying $\mathcal{L}_R$ to the first equality in \eqref{assocmet}), and the \textit{torsion} tensor
$$\tau:=\mathcal{L}_R g\,,$$
which appears in the definition of the Chern-Hamilton energy functional.

The following lemma is not hard to prove, see~\cite{blair} for details:

\begin{lemma}\label{basicproperties}
The following properties hold:
\begin{itemize}
\item[(i)] $h$ and $\tau$ are related through the relation $\tau(\cdot, \cdot) = 2g(h\phi \cdot, \cdot)$.
\item[(ii)] The  kernel of $h$ is at least 1-dimensional since it contains the Reeb field, i.e., $hR=0$.
\item[(iii)] For any vector field $X$ on $M$, we have the identity
\begin{equation}\label{nablaReeb}
\nabla_{X} R = -\phi X -\phi h X\,,
\end{equation}
\item[(iv)] $h$ is a symmetric operator and $\tr h=0$.
\end{itemize}
\end{lemma}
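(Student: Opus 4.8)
The plan is to order the four items so as to avoid the circular dependence between (i) and (iii) that a naive argument invites, and to isolate a single genuine computation (item (i)) from which the rest follows algebraically. First I would record the algebraic consequences of compatibility that cost nothing. From $g(X,\phi Y)=\tfrac12\dif\alpha(X,Y)$ and the antisymmetry of $\dif\alpha$, the tensor $\phi$ is $g$-skew, $g(\phi X,Y)=-g(X,\phi Y)$, i.e. $\phi^\ast=-\phi$ for the $g$-adjoint. Setting $X=R$ and using $i_R\dif\alpha=0$ gives $\alpha\circ\phi=0$, while $\phi R=0$ and $g(R,\cdot)=\alpha$ are built into the structure \eqref{assocmet}. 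Item (ii) is then immediate: since $\phi R=0$ and $[R,R]=0$,
$$(\mathcal L_R\phi)R=\mathcal L_R(\phi R)-\phi(\mathcal L_R R)=0,$$
so $hR=0$. At this stage I would also observe that $h$ takes values in $\zeta$: applying $\alpha$ to the relation $h\phi+\phi h=0$ and using $\alpha\circ\phi=0$ yields $\alpha(h\phi X)=0$; as $\phi X$ sweeps out $\zeta$ and $hR=0$, this gives $\alpha\circ h=0$.

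The computational heart is (i). The idea is to differentiate the fundamental identity $g(X,\phi Y)=\tfrac12\dif\alpha(X,Y)$ along $R$. Because $\mathcal L_R\alpha=i_R\dif\alpha+\dif(\alpha(R))=0$ and the exterior derivative commutes with $\mathcal L_R$, we have $\mathcal L_R(\tfrac12\dif\alpha)=0$; expanding the left-hand side by the Leibniz rule for the $(0,2)$-tensor $(X,Y)\mapsto g(X,\phi Y)$ gives
$$0=(\mathcal L_R g)(X,\phi Y)+g\big(X,(\mathcal L_R\phi)Y\big)=\tau(X,\phi Y)+2g(X,hY).$$
Hence $\tau(X,\phi Y)=-2g(X,hY)$. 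Substituting $Y\mapsto\phi W$, using $\phi^2=-I+\alpha\otimes R$ together with $\tau(\cdot,R)=0$ (which follows from $\mathcal L_R\alpha=0$ and $\mathcal L_R R=0$, or from $\nabla_R R=0$ and $\abs{R}_g\equiv1$), one obtains $\tau(X,W)=2g(h\phi W,X)$. Finally the symmetry $\tau(X,W)=\tau(W,X)$ forces $g(h\phi W,X)=g(h\phi X,W)$, i.e. $h\phi$ is $g$-self-adjoint, and rewrites the formula as $\tau(X,W)=2g(h\phi X,W)$, which is (i).

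Items (iv) and (iii) are then harvested. For the symmetry in (iv): since $\phi^\ast=-\phi$ and $h\phi$ is self-adjoint, $\phi h=-h\phi$ is self-adjoint as well, so $(\phi h)^\ast=\phi h$ reads $-h^\ast\phi=-h\phi$, i.e. $(h^\ast-h)\phi=0$; as the image of $\phi$ is $\zeta$ this kills $h^\ast-h$ on $\zeta$, while on $R$ one has $(h^\ast-h)R=h^\ast R$ and $g(h^\ast R,X)=g(R,hX)=\alpha(hX)=0$ by $\alpha\circ h=0$, so $h^\ast=h$. The trace vanishes by a frame computation: in a $g$-orthonormal frame $\{e,\phi e,R\}$ one has $g(h\phi e,\phi e)=-g(\phi he,\phi e)=-g(he,e)$ (using $he\in\zeta$ and that $\phi$ is a $g$-isometry on $\zeta$), together with $hR=0$, so $\tr h=g(he,e)+g(h\phi e,\phi e)+0=0$. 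For (iii) I would decompose the bilinear form $(X,Y)\mapsto g(\nabla_X R,Y)$ into its symmetric and skew parts. Torsion-freeness and $g(R,\cdot)=\alpha$ give the skew part $\tfrac12\big(g(\nabla_X R,Y)-g(\nabla_Y R,X)\big)=\tfrac12\dif\alpha(X,Y)=-g(\phi X,Y)$, while the symmetric part is $\tfrac12\tau(X,Y)$; substituting (i) and the anticommutation relation, $\tfrac12\tau(X,Y)=g(h\phi X,Y)=-g(\phi hX,Y)$, and adding the two parts yields $\nabla_X R=-\phi X-\phi hX$.

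The only real obstacle is organizational rather than technical: each of (i) and (iii) can be deduced from the other, so a direct attack risks circularity. The device that breaks it is the observation $\mathcal L_R\dif\alpha=0$, which lets one compute $\tau$ in terms of $h$ (item (i)) without any reference to the Levi-Civita connection, after which (iii) drops out of the symmetric/skew splitting of $\nabla R$. The remaining care is bookkeeping: keeping track of the factor $\tfrac12$ in $g(\cdot,\phi\cdot)=\tfrac12\dif\alpha$ and of the sign conventions for $\phi^\ast$ and for the anticommutation $h\phi+\phi h=0$.
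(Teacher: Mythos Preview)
Your argument is correct and self-contained; each step checks out, including the derivation of $\alpha\circ h=0$, the key identity $\tau(X,\phi Y)+2g(X,hY)=0$ obtained from $\mathcal L_R\dif\alpha=0$, the adjoint gymnastics for the symmetry of $h$, and the symmetric/skew decomposition of $g(\nabla_X R,Y)$ yielding (iii). Note that the paper itself does not give a proof of this lemma at all: it simply states that it ``is not hard to prove'' and refers the reader to Blair's monograph, so your write-up supplies precisely what the paper omits.
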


Since $h\phi+\phi h=0$, it is clear that if $X$ is an eigenvector of $h$ with eigenvalue $\lambda$ then $\phi X$ is an eigenvector with eigenvalue $-\lambda$. In fact, according to~\cite[Lemma 2.3]{pant}, $h$ can be consistently diagonalized at each point of an open and dense subset $\mathcal N$ of $M$. So around each point $p\in\mathcal N$ there exists a local orthonormal frame with respect to which $h$ reads as
$$\left( {\begin{array}{ccc}
   0 & 0 & 0 \\
   0 & -\lambda & 0 \\
   0 & 0 & \lambda
  \end{array} } \right)\,,$$
and its squared norm can be computed as
$$\abs{h}_g^2=2\lambda^2\,.$$
Since $\abs{h}_g^2$ is a global object, we conclude that there exists a globally defined smooth non-negative function $\lambda^2$ such that, around any point where it does not vanish, its square root is an eigenvalue of $h$. An important observation is that, using Lemma~\ref{basicproperties}(i), we easily infer that
\begin{equation}\label{eq.lam}
8\lambda^2 = \abs{\mathcal{L}_R g}_g^2\,,
\end{equation}
so the function $\lambda^2$ does not really depend on the choice of the $(1,1)$-tensor $\phi$ (recall we said it is not unique), but it only depends on $\alpha$ and the compatible metric $g$.

Now we turn our attention to critical compatible metrics. Tanno proved in~\cite{tan} the following characterization of critical compatible metrics:
\begin{proposition}
A compatible metric is a critical point of the Chern-Hamilton energy functional~\eqref{energy} if and only if it satisfies the equation:
\begin{equation}\label{critmetric}
\nabla_R h = 2h\phi\,,
\end{equation}
which is equivalent to
\begin{equation*}
\left(\nabla_R \mathcal{L}_R g\right)(\cdot, \cdot)=2\mathcal{L}_R g (\phi \cdot, \cdot)\,.
\end{equation*}
\end{proposition}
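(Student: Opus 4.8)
The plan is to compute the first variation of $E$ directly, parametrizing compatible metrics by their associated tensor $\phi$ via \eqref{defcompatmetric}. Two structural simplifications drive the whole argument. First, the volume form $\mathrm{vol}_g=\tfrac12\alpha\wedge\dif\alpha$ depends only on $\alpha$, so it stays \emph{fixed} along any variation inside $\Mm_2(\alpha)$. Second, $h$ is $g$-symmetric by Lemma~\ref{basicproperties}(iv), so $\abs{\tau}_g^2=4\abs{h}_g^2=4\tr(h^2)$ by \eqref{eq.lam} together with $\abs{h}_g^2=2\lambda^2$, whence
\[
E(g)=4\int_M\tr(h^2)\,\mathrm{vol}_g ,
\]
with a metric-independent integrand (trace of an endomorphism). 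I would then identify the admissible infinitesimal variations: writing $\phi_t$ for a smooth path of compatible tensors with $\dot\phi=D$, differentiating $\phi^2=-I+\alpha\otimes R$ and the compatibility relation $\dif\alpha(\phi\cdot,\phi\cdot)=\dif\alpha(\cdot,\cdot)$ shows that the admissible $D$ are exactly the $g$-symmetric endomorphisms with $DR=0$ anti-commuting with $\phi$ (equivalently, traceless symmetric endomorphisms of $\zeta=\ker\alpha$); this is the standard description of the tangent space to the space of compatible structures.

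Next I would differentiate. Since $h=\tfrac12\mathcal{L}_R\phi$ gives $\dot h=\tfrac12\mathcal{L}_R D$, and the volume is fixed,
\[
\dot E=8\int_M\tr(h\dot h)\,\mathrm{vol}_g=4\int_M\tr\big(h\,\mathcal{L}_R D\big)\,\mathrm{vol}_g .
\]
Using $\mathcal{L}_R\big(\tr(hD)\big)=\tr\big((\mathcal{L}_R h)D\big)+\tr\big(h\,\mathcal{L}_R D\big)$ and the fact that $\int_M\mathcal{L}_R\big(\tr(hD)\big)\,\mathrm{vol}_g=0$ (as $R$ is solenoidal and $M$ is closed), I integrate by parts to obtain
\[
\dot E=-4\int_M\tr\big((\mathcal{L}_R h)\,D\big)\,\mathrm{vol}_g .
\]
The key computation is to rewrite $\mathcal{L}_R h$ through the Levi-Civita connection. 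From $(\mathcal{L}_R h)X=(\nabla_R h)X-\nabla_{hX}R+h\nabla_XR$, the identity $\nabla_ZR=-\phi Z-\phi hZ$ of \eqref{nablaReeb}, and $h\phi=-\phi h$, I expect
\[
\mathcal{L}_R h=\nabla_R h+2\phi h+2\phi h^2 .
\]

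To extract the Euler--Lagrange equation I would analyse this expression relative to transposition and conjugation by $\phi$. Here I would invoke the standard contact-metric identity $\nabla_R\phi=0$ (which follows from Blair's formula for $\nabla_X\phi$ since $hR=0$): it implies that $\nabla_R h$ is $g$-symmetric, and, differentiating $h\phi+\phi h=0$ along $R$, that $\nabla_R h$ anti-commutes with $\phi$. A direct check shows $\phi h$ is $g$-symmetric and $\phi$-anti-commuting while $\phi h^2$ is $g$-skew-symmetric. Consequently, pairing against a $g$-symmetric $D$ annihilates the $\phi h^2$ term, and
\[
\dot E=-4\int_M\tr\big((\nabla_R h+2\phi h)\,D\big)\,\mathrm{vol}_g .
\]
The tensor $\nabla_R h+2\phi h$ is itself $g$-symmetric, $\phi$-anti-commuting and kills $R$ (using $\nabla_RR=0$ and $hR=0$), hence lies in the admissible subspace, on which $\tr(AD)$ is the positive-definite Hilbert--Schmidt pairing. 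Therefore $\dot E=0$ for every admissible $D$ localizes to the pointwise identity $\nabla_R h+2\phi h=0$, i.e. $\nabla_R h=2h\phi$, which is \eqref{critmetric}; the equivalent form in terms of $\mathcal{L}_R g$ then follows from Lemma~\ref{basicproperties}(i) and $\nabla_R\phi=0$.

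The main obstacle I anticipate is the bookkeeping in this last step: one must identify the tangent space of admissible variations correctly and verify that the weak variational identity descends to a pointwise equation, which hinges on $\nabla_R h+2\phi h$ belonging \emph{entirely} to the admissible subspace (the place where $\nabla_R\phi=0$ is indispensable). The rewriting of $\mathcal{L}_R h$ in terms of $\nabla_R h$ is mechanical but sign-sensitive, and is the other point where care is needed.
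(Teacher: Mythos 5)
The paper itself offers no proof of this proposition: it is quoted from Tanno~\cite{tan}, so there is no internal argument to compare against. Your proposal is correct, and it is essentially the classical first-variation computation that Tanno carries out (see also Chapter 10 of Blair's monograph~\cite{blair}): fixed volume form along variations in $\Mm_\theta(\alpha)$, identification of the tangent space with $g$-symmetric, $\phi$-anti-commuting endomorphisms annihilating $R$, integration by parts along the solenoidal field $R$, the identity $\mathcal{L}_R h=\nabla_R h+2\phi h+2\phi h^2$, the observation that $\phi h^2$ is $g$-skew and hence pairs to zero against symmetric variations, and the localization step, which works precisely because $\nabla_R h+2\phi h$ lies in the admissible subbundle. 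One minor simplification: the $g$-symmetry of $\nabla_R h$ follows already from $\nabla g=0$ and the symmetry of $h$, with no need of $\nabla_R\phi=0$; the latter (a standard contact-metric identity) is genuinely needed only for the $\phi$-anti-commutation of $\nabla_R h$ and for passing to the equivalent formulation in terms of $\mathcal{L}_R g$.
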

The following lemma is key in the proof of Theorem~\ref{T.main}, because it shows a connection between the function $\lambda^2$ of a critical compatible metric and the dynamics of the Reeb field $R$:

\begin{lemma}\label{1stintegral}
Let $(M,\alpha)$ be a contact $3$-manifold and assume that $g$ is a critical compatible metric. Then the function $\lambda^2 \in C^\infty(M)$ is a first integral of $R$, i.e., $i_Rd(\lambda^2)=0$.
\end{lemma}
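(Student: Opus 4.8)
The plan is to translate the statement about $\lambda^2$ into a statement about the globally defined smooth function $\tr(h^2)$ and to differentiate the latter along $R$. Since $h$ is symmetric with eigenvalues $0,\pm\lambda$, we have $\tr(h^2)=2\lambda^2=\abs{h}_g^2$, and this quantity is smooth on all of $M$ even though $\lambda$ itself need not be (the eigenvalues may cross on the complement of $\mathcal{N}$). Working with the invariant $\tr(h^2)$ rather than with $\lambda$ is precisely what lets me sidestep that regularity issue and carry out a frame-free computation valid at every point of $M$; the conclusion $i_R d(\lambda^2)=0$ is then equivalent to $R\bigl(\tr(h^2)\bigr)=0$.

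First I would use that the Levi-Civita connection is a derivation commuting with contractions, so that for the scalar $\tr(h^2)$ one has $R\bigl(\tr(h^2)\bigr)=\nabla_R\tr(h^2)=\tr\bigl(\nabla_R(h^2)\bigr)$. The Leibniz rule for the composition of $(1,1)$-tensors gives $\nabla_R(h^2)=(\nabla_R h)\,h+h\,(\nabla_R h)$, and cyclicity of the trace yields $R\bigl(\tr(h^2)\bigr)=2\,\tr\bigl((\nabla_R h)\,h\bigr)$. Next I would insert the Euler--Lagrange equation \eqref{critmetric}, $\nabla_R h=2h\phi$, to obtain $R\bigl(\tr(h^2)\bigr)=4\,\tr(h\phi h)=4\,\tr(\phi h^2)$, the last equality again by cyclicity. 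The only genuine algebraic input is then the anticommutation relation $\phi h=-h\phi$ recorded after \eqref{assocmet} (and used in Lemma~\ref{basicproperties}): combining it with cyclicity gives $\tr(\phi h^2)=\tr(h\phi h)=-\tr(h^2\phi)=-\tr(\phi h^2)$, whence $\tr(\phi h^2)=0$. Therefore $R\bigl(\tr(h^2)\bigr)=0$, i.e. $R(2\lambda^2)=0$, which is exactly the desired identity $i_R d(\lambda^2)=0$.

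I do not expect a serious obstacle here: once one commits to differentiating the smooth invariant $\tr(h^2)$ rather than the possibly non-smooth eigenvalue $\lambda$, the argument reduces to a short tensor computation whose sole ingredients are the critical equation~\eqref{critmetric} and the anticommutation $h\phi+\phi h=0$. The only points requiring a little care are the justification that $\nabla_R$ passes through the trace (that the covariant derivative commutes with contraction) and the careful bookkeeping of cyclicity for the non-symmetric products $h\phi h$ and $\phi h^2$; both are routine. It is worth emphasizing that the computation never appeals to a local diagonalizing frame, so it holds on the whole of $M$ and not merely on the open dense set $\mathcal{N}$.
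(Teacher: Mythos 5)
Your proof is correct. It rests on the same core idea as the paper's proof --- pairing Tanno's equation $\nabla_R h = 2h\phi$ with $h$ itself and using the anticommutation $h\phi+\phi h=0$ to kill the resulting term --- but your execution is genuinely different and, in one respect, cleaner. The paper computes $\langle \nabla_R h, h\rangle_g$ in a local orthonormal frame, which produces the correction term $\sum_i g(\nabla_R e_i, h^2 e_i)$; to dispose of it, the authors restrict to a point of the open dense set $\mathcal{N}$ where $h$ can be diagonalized by a smooth frame, so their computation literally establishes $R(\lambda^2)=0$ only on $\mathcal{N}$, with the extension to all of $M$ left to an implicit continuity/density argument. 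Your formulation via $\tr(h^2)$, using that $\nabla$ commutes with contraction together with the Leibniz rule and cyclicity of the trace, is frame-free and valid at every point of $M$, so no dense-set or continuity step is needed; all of your algebraic manipulations ($\tr(\phi h^2)=\tr(h\phi h)=-\tr(h^2\phi)=-\tr(\phi h^2)$, hence zero) check out, and the identification $\tr(h^2)=\abs{h}_g^2=2\lambda^2$ is exactly the paper's global definition of $\lambda^2$, using the symmetry of $h$ from Lemma~\ref{basicproperties}(iv). Incidentally, the paper's detour through $\mathcal{N}$ is not even necessary within their own frame computation: for any orthonormal frame the connection coefficients $g(\nabla_R e_i,e_j)$ form an antisymmetric matrix, which pairs to zero against the symmetric tensor $h^2$, so the correction term vanishes identically; your trace argument is the invariant expression of precisely this fact.
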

\begin{proof}
In the Euler-Lagrange equation~\eqref{critmetric} take the scalar product with the $(1,1)$-tensor $h$ on both sides,
$$\left\langle \nabla_R h , h\right\rangle_g = 2\langle h\phi, h \rangle_g\,,$$
which, with respect to any local orthonormal frame $\{e_i\}_{i=1,2,3}$ in a neighborhood $U$ of a point $p\in M$, reads as
$$
\sum_{i=1}^3 g \left((\nabla_R h)(e_i) , he_i\right) = 2\sum_{i=1}^3 g(h(\phi e_i), he_i)\,.
$$
Then, notice that the right hand side of this equality is zero, which follows from the fact that $h\phi=-\phi h$ and that $X$ is $g$-orthogonal to $\phi X$ for any vector field $X$ on $M$. Moreover, expanding the covariant derivative $\nabla_R$ in the left hand side, we get
\begin{align*}
\sum_{i=1}^3 \Big( g \left(\nabla_R he_i , he_i\right)- g(h\nabla_R e_i, he_i)\Big)&=\tfrac{1}{2}R(\abs{h}_g^2) - \sum_i g(\nabla_R e_i, h^2e_i)\\
&=R(\lambda^2)- \sum_i g(\nabla_R e_i, h^2e_i)\,.
\end{align*}
Finally, assume that $p\in \cN$, the open and dense set introduced before, which allows us to take the basis $\{e_i\}_{i=1,2,3}$ formed by eigenvectors of the tensor $h$. In this case we easily see that
$$
g(\nabla_R e_i, h^2e_i)=\lambda^2 g(\nabla_R e_i, e_i)= \frac{1}{2}\lambda^2 R (g(e_i, e_i))=0\,,
$$
for every $i$, thus implying that $R(\lambda^2)=0$, as we wanted to show.
\end{proof}

\section{Proof of Theorem~\ref{T.main}}\label{S.proof}

We divide it in three steps, in Section~\ref{SS.dir} we prove that the existence of a critical compatible metric implies that the contact manifold is Sasakian or the associated Reeb flow is supported by a smooth calibrated bicontact structure and, moreover, it is $C^\infty$-conjugate to an algebraic Anosov flow on a compact quotient of $\widetilde{SL}(2, \RR)$. The converse implication, i.e., that a contact form whose associated Reeb flow is supported by a calibrated bicontact structure admits a critical compatible metric is established in Section~\ref{SS.conv}, where we also complete the proof of the equivalence between a Reeb flow being supported by a calibrated bicontact structure and being $C^\infty$-conjugate to an algebraic Anosov flow modeled on $\widetilde{SL}(2, \RR)$. Finally, in Section~\ref{SS.min} we show that any critical compatible metric is a global minimizer of the Chern-Hamilton energy functional.

We begin by recalling some definitions and by introducing the notion of a calibrated bicontact structure supporting a Reeb field, closely related to the notion of $(-1)$-Cartan structure~\cite{perro}.  In what follows we employ the notation $\Omega := \frac{1}{2}\alpha \wedge \dif \alpha$.

A flow $\varphi_t$ on a closed 3-manifold $M$ is said to be \textit{Anosov} if there is a Riemannian metric on $M$, a constant $0<\Lambda<1$ and line subbundles $E^s$ and $E^u$ of $T M$ such that $T M=E^s \oplus E^u \oplus \operatorname{span}\{X\}$, where $X=\dot{\varphi}_t$, $d \varphi_t\left(E^{s, u}\right)=E^{s, u}$ for every $t \in \mathbb{R}$ and
$$
\left\|\left.d \varphi_t\right|_{E^s}\right\| \leq \Lambda^t\,, \qquad
\left\|\left.d \varphi_{-t}\right|_{E^u}\right\| \leq \Lambda^t\,, \quad \forall t>0\,,
$$
where the operator norm $\norm{\cdot}$ above is induced by the Riemannian metric on $M$. It is well known that the plane fields $\cE^s:= E^{s}\oplus \operatorname{span}\{X\}$ and $\cE^u:= E^{u}\oplus \operatorname{span}\{X\}$ are integrable, and generally only of class $C^1$.

A \textit{bicontact structure} on a 3-manifold $M$ is defined as a pair of transverse contact plane fields $(\zeta_1, \zeta_2)$ defined by 1-forms $\eta_1$ and $\eta_2$ such that $\eta_1 \wedge d\eta_1$ and $\eta_2 \wedge d\eta_2$  are volume forms on $M$  of opposite orientations. We say that a vector field $X$ is \textit{supported} by such a bi-contact structure if $X\in \ker\eta_1\cap\ker\eta_2$.

\begin{definition}\label{def:cali}
The associated Reeb flow $R$ of a contact manifold $(M,\alpha)$
is supported by a \textit{calibrated bi-contact structure} $(\zeta_1, \zeta_2)$
if there exists a transverse pair of positive and negative contact plane fields $\zeta_1$ and $\zeta_2$ on $M$
which are tangent to the Reeb flow $R$ satisfying the following conditions.
Any point admits a neighborhood on which there exist
$C^\infty$ defining contact forms $\eta_1,\eta_2$ for $\zeta_1$ and $\zeta_2$ satisfying
\begin{equation}\label{cali}
\begin{split}
&\eta_1\wedge d\eta_1=-\eta_2\wedge d\eta_2=\varkappa\Omega\, ,\\
&\eta_1\wedge d\eta_2=\eta_2\wedge d\eta_1=0 \, ,\\
&\alpha\wedge\eta_1\wedge\eta_2=\Omega\, ,
\end{split}
\end{equation}
for some constant $\varkappa> 0$.
\end{definition}

Notice that if the pair of local defining 1-forms $(\eta_1,\eta_2)$
satisfies the above conditions, so does the pair $(-\eta_1,-\eta_2)$.
If $\zeta_1$ and $\zeta_2$ are co-orientable (if one is co-orientable, so is the other), we can take a pair of global defining 1-forms $(\eta_1,\eta_2)$.
If not, instead of considering both pairs $\pm(\eta_1,\eta_2)$,
we can take the double covering $(\widetilde{M},\widetilde{\alpha})$ of $(M,\alpha)$ so that $\zeta_1$ and $\zeta_2$ lift to co-orientable contact bundles and take a global pair. As the involution $\tau$ for the double covering acts as changing the sign of global defining 1-forms, those forms descends to  $\pm(\eta_1,\eta_2)$.
Therefore in any case, this notion is well-defined for  $(\zeta_1, \zeta_2)$ and
the local pairs $\pm(\eta_1,\eta_2)$ are determined.

\begin{remark}
If $R$ is Anosov, then \cite[Proposition 2]{mitsu} it is supported by a bi-contact structure $(\zeta_1, \zeta_2)$, $\zeta_i=\ker \eta_i$, which means that $R\in \ker\eta_1\cap\ker\eta_2$, and $\eta_1\wedge d\eta_1= \varkappa_1 \Omega$, $\eta_2\wedge d\eta_2=-\varkappa_2\Omega$ for some positive functions $\varkappa_{1},\varkappa_2$. Recall that there is a certain amount of flexibility in choosing this bi-contact structure: in~\cite[$\S 5$]{hozo1} it was proved that we can always assume that $\eta_1 \wedge d \eta_1 = -\eta_2 \wedge d \eta_2$  and $\eta_1 \wedge d \eta_2 =\eta_2 \wedge d \eta_1 =0$, paying the price of taking contact forms that are only of class $C^1$.

In the case of a Reeb-Anosov flow, our first condition in Equation~\eqref{cali} is that $\varkappa_1 = \varkappa_2 \equiv$ constant. Imposing the third condition $\alpha\wedge\eta_1\wedge\eta_2=\Omega$ has the following consequence: $\imath_R \Omega = \eta_1\wedge\eta_2$ and by taking the exterior derivative and using Cartan's formula,  we get $\mathcal{L}_R \Omega = d\eta_1\wedge \eta_2 - \eta_1\wedge d\eta_2$. But $\mathcal{L}_R \Omega=0$ so we must have $d\eta_1\wedge \eta_2 = \eta_1\wedge d\eta_2$. Our second condition in~\eqref{cali} asks that they are both identically zero.
\end{remark}

\subsection{Necessary conditions for the existence of critical compatible metrics}\label{SS.dir}

First we prove the following fundamental lemma, which shows that the Reeb field is supported by a special bi-contact structure provided that the contact form admits a critical compatible metric. The proof of this result is modeled on previous works of Perrone~\cite{perr, perro}, but we include it for the sake of completeness.

\begin{lemma}\label{bi-contactsup}
Let $(N,\alpha)$ be a compact contact $3$-manifold,
possibly with boundary.
Assume that $g$ is a critical compatible metric
such that the function $\lambda^2$ introduced in Section~{\rm 2} 
is nowhere vanishing.
Then the associated Reeb field $R$ is supported by a $C^\infty$
bi-contact structure $(\zeta_1, \zeta_2)$
with a defining pair of 1-forms $\pm(\eta_1, \eta_2)$ that satisfies:
\begin{equation}\label{bic0}
\begin{split}
&\eta_1\wedge d\eta_1=-\eta_2\wedge d\eta_2=\lambda\Omega\,,\\
&\eta_1\wedge d\eta_2=\eta_2\wedge d\eta_1=0\,, \\
&\alpha \wedge \eta_1 \wedge \eta_2 = \Omega.
\end{split}
\end{equation}
Moreover we also have:
\begin{equation}\label{bic1}
\mathcal{L}_R \eta_1 = -\lambda \eta_2, \qquad \mathcal{L}_R \eta_2 = -\lambda \eta_1\,,  
\end{equation}
and the critical metric $g$ reads:
\begin{equation}\label{g_eta}
g =\alpha \otimes \alpha +
\eta_1 \otimes \eta_1 + \eta_2 \otimes \eta_2\,.
\end{equation}
\end{lemma}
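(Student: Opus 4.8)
The plan is to diagonalize $h$ globally, read off the coframe dual to its eigenvectors, and show that criticality forces this coframe to satisfy the rigid structure equations \eqref{bic1}, from which \eqref{bic0} and \eqref{g_eta} then follow by routine exterior algebra.

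First I would exploit the hypothesis that $\lambda^2$ is nowhere vanishing. Then $\lambda=\sqrt{\lambda^2}>0$ is a smooth function and, by Lemma~\ref{basicproperties}, $h$ is symmetric with three everywhere-distinct eigenvalues $0,-\lambda,+\lambda$, its kernel spanned by $R$. The spectral projections are therefore smooth, the eigenline fields are smooth, and one obtains (at least locally) an orthonormal frame $\{R,e_2,e_3\}$ with $he_2=-\lambda e_2$ and $e_3=\phi e_2$, so that $he_3=+\lambda e_3$ and $\phi e_3=\phi^2 e_2=-e_2$. I then set $\eta_1:=g(e_2,\cdot)$ and $\eta_2:=-g(e_3,\cdot)$. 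Since $\{\alpha,\eta_1,\eta_2\}$ is an orthonormal coframe with $\alpha=g(R,\cdot)$, the decomposition \eqref{g_eta} is immediate. The only global subtlety is that the eigenline fields need not be orientable; but flipping $e_2\mapsto-e_2$ flips $e_3$ too, hence sends $(\eta_1,\eta_2)\mapsto(-\eta_1,-\eta_2)$, and every identity in \eqref{bic0}--\eqref{g_eta} is invariant under this simultaneous sign change. Thus the construction is well defined globally, passing if necessary to the double cover on which the eigenbundles are oriented.

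The crux --- and the only place where criticality (as opposed to mere compatibility) enters --- is to pin down the covariant derivative of the frame along $R$. Writing $\nabla_R e_3=a\,e_2$ (the components along $R$ and $e_3$ vanish because $\nabla_RR=0$ and $e_3$ is unit), I differentiate $he_3=\lambda e_3$ along $R$:
\[
(\nabla_R h)e_3+h(\nabla_R e_3)=R(\lambda)\,e_3+\lambda\,\nabla_R e_3 .
\]
Here $R(\lambda)=0$ by Lemma~\ref{1stintegral}, while the Euler--Lagrange equation \eqref{critmetric} gives $(\nabla_R h)e_3=2h\phi e_3=2\lambda e_2$. Substituting $\nabla_R e_3=a\,e_2$ and $he_2=-\lambda e_2$ yields $2\lambda-a\lambda=a\lambda$, hence $a=1$; equivalently $\nabla_R e_3=e_2$ and $\nabla_R e_2=-e_3$. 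I expect this cancellation to be the main obstacle: without criticality the constant $a$ is undetermined and the forms below fail to be contact.

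With $a=1$ in hand the rest is bookkeeping. Using $\nabla_XR=-\phi X-\phi hX$ from \eqref{nablaReeb} and $\tau(\cdot,\cdot)=2g(h\phi\cdot,\cdot)$ from Lemma~\ref{basicproperties}(i), I compute $\mathcal L_R e_2$ and $\mathcal L_R e_3$ and, from the identity $(\mathcal L_R g(e_k,\cdot))(X)=\tau(e_k,X)+g([R,e_k],X)$, obtain $\mathcal L_R\eta_1=-\lambda\eta_2$ and $\mathcal L_R\eta_2=-\lambda\eta_1$, which is \eqref{bic1}. Finally, since $i_R\eta_i=0$ we have $i_Rd\eta_i=\mathcal L_R\eta_i$; expanding $d\eta_i$ in the coframe $\{\alpha,g(e_2,\cdot),g(e_3,\cdot)\}$ and using $d\alpha=-2\,g(e_2,\cdot)\wedge g(e_3,\cdot)$ shows that the $\alpha$-components of $d\eta_i$ are precisely those dictated by \eqref{bic1}, whence $\eta_1\wedge d\eta_1=-\eta_2\wedge d\eta_2=\lambda\Omega$ and $\eta_1\wedge d\eta_2=\eta_2\wedge d\eta_1=0$. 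Because $\lambda\neq0$ these are genuine contact forms, and $\alpha\wedge\eta_1\wedge\eta_2=\Omega$ follows from $\Omega=\mathrm{vol}_g$ together with $\{\alpha,\eta_1,\eta_2\}$ being a positively oriented orthonormal coframe. This establishes \eqref{bic0} and shows $R\in\ker\eta_1\cap\ker\eta_2$, i.e. $R$ is supported by the bi-contact structure $(\eta_1,\eta_2)$.
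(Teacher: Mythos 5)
Your construction and the key computation coincide with the paper's: both diagonalize $h$, write $\nabla_R e_3=a\,e_2$, and use the Euler--Lagrange equation \eqref{critmetric} together with Lemma~\ref{1stintegral} to force $a=1$, after which the structure equations follow; your only (harmless) deviation is that you derive \eqref{bic0} from \eqref{bic1} by exterior algebra, using $i_R d\eta_i=\mathcal{L}_R\eta_i$ and the expansion of $d\eta_i$ in the coframe, while the paper evaluates the $3$-forms $\eta_i\wedge d\eta_j$ directly on the frame. That part of your argument is correct.

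The gap is in the globalization step. The lemma asserts the existence of $C^\infty$ contact forms $\eta_1,\eta_2$ on all of $N$, which is equivalent to the existence of global unit eigenvector fields of $h$, i.e.\ to the orientability of the $\pm\lambda$-eigenline bundles. Your sign-invariance observation does not deliver this: invariance of the identities under $(\eta_1,\eta_2)\mapsto(-\eta_1,-\eta_2)$ shows only that the sign-invariant derived objects --- the plane fields $\ker\eta_1$, $\ker\eta_2$, the $3$-forms $\eta_i\wedge d\eta_j$, the tensor $\eta_1\otimes\eta_1+\eta_2\otimes\eta_2$ --- patch together globally; it does not produce the $1$-forms themselves. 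A line subbundle of the oriented plane bundle $\ker\alpha$ can perfectly well be non-orientable, so orientability must be proved, not assumed, and your own hedge betrays the problem: if the construction were ``well defined globally'' there would be no need to ``pass to the double cover''. Passing to a double cover proves a strictly weaker statement (the lemma on some cover of $N$, not on $N$), and that weaker statement is insufficient for the way the lemma is used later: in Section~\ref{SS.dir}, in the case $Z_\Lambda=\emptyset$, it is the globally defined frame $\{R,e_s,e_u\}$ on $M$ with its $\mathfrak{sl}(2,\RR)$ bracket relations that identifies $M$ itself --- not a double cover of $M$ --- with a compact quotient of $\widetilde{SL}(2,\RR)$. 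The paper closes exactly this point by invoking consistent global diagonalizability of $h$ when its three eigenvalues are everywhere distinct, citing \cite[Lemma~2.3]{pant} and \cite[Theorem~6.1]{perro}; to complete your proof you must either invoke such a result or prove directly that the eigenbundles of $h$ over $N$ admit global nonvanishing sections.
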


\begin{proof}
The assumption on $\la^2$ implies that the $(1,1)$-tensor $h$ is nowhere vanishing and has three distinct eigenvalues. Consider first the case that the eigenspace line bundles $\ell_1$ and $\ell_2$
of the eigenvalues $-\lambda$ and $\lambda$ of the $(1,1)$-tensor $h$
are trivial (if one is trivial so is the other). Accordingly, let $\left\{R, e_1, e_2 = - \phi e_1\right\}$ be a global orthonormal basis of smooth eigenvectors for $h$ on $N$ with
$$h R=0\,, \qquad h e_1=-\lambda e_1\,, \qquad h e_2=\lambda e_2\,,$$
where $\lambda\in C^\infty(N)$ is the positive square-root of the function $\la^2$. Let $\eta_1, \eta_2$ be the 1-forms $g$-dual to $e_1$ and $e_2$, respectively, so that $\left\{\alpha, \eta_1, \eta_2\right\}$ is a global $C^\infty$ basis of 1-forms on $N$.

First, using Equation~\eqref{nablaReeb}, we have
\begin{equation}\label{nablaeiR}
\nabla_{e_1} R=-\phi e_1-\phi h e_1=(1-\lambda) e_2\,, \qquad \nabla_{e_2} R = -(1+\lambda) e_1\,.
\end{equation}
Expanding $\nabla_{R} e_i$ in the orthonormal basis $\{R, e_1, e_2\}$, and using $\nabla_{R} R=0$, it follows that
\begin{equation}\label{nablaRei}
\nabla_{R} e_1=a e_2 \quad \text { and } \quad \nabla_{R} e_2=-a e_1\,,
\end{equation}
where $a:=g\left(\nabla_{R} e_1, e_2\right)$ is a $C^\infty$ function on $N$. Now, by using~\eqref{nablaRei} and the definition of the orthonormal frame, we obtain
$$
\left(\nabla_{R} h\right) e_1=\nabla_{R} h e_1-h\left(\nabla_{R} e_1\right)= -R(\lambda) e_1 - 2 a \lambda e_2\,,
$$
and analogously
$$\left(\nabla_{R} h\right) e_2=R(\lambda) e_2-2 a \lambda e_1\,.$$
Using the fact that $R$ is geodesic and $hR=0$, we also have $\left(\nabla_{R} h\right) R=\nabla_R h R - h\nabla_R R=0$, so we finally obtain
$$
\nabla_{R} h = 2 a h \phi+ R(\ln \lambda) h\,,
$$
thus implying, in view of Equation~\eqref{critmetric} and Lemma \ref{1stintegral}, that $a=1$ because $g$ is a critical metric by assumption.

Next, using~\eqref{nablaRei} with $a=1$ and the identities~\eqref{nablaeiR}, a straightforward computation allows us to obtain
$$
\begin{aligned}
& \eta_1 \wedge d \eta_1\left(R, e_1, e_2\right)=-d \eta_1\left(R, e_2\right)= g\left(e_1, \nabla_{R} e_2-\nabla_{e_2} R\right) = \lambda >0\,, \\
& \eta_2 \wedge d \eta_2\left(R, e_1, e_2\right)=d \eta_2\left(R, e_1\right)=- g\left(e_2, \nabla_{R} e_1-\nabla_{e_1} R\right) = - \lambda <0\,, \\
& \eta_1 \wedge d \eta_2\left(R, e_1, e_2\right)=-d \eta_2\left(R, e_2\right)= g\left(e_2, \nabla_{R} e_2-\nabla_{e_2} R\right)=0\,, \\
& \eta_2 \wedge d \eta_1\left(R, e_1, e_2\right) = d \eta_1\left(R, e_1\right) = - g\left(e_1, \nabla_{R} e_1-\nabla_{e_1} R\right) = 0\,.
\end{aligned}
$$
Therefore, the 1-forms $\left(\eta_1, \eta_2\right)$ define a bi-contact structure on $N$ satisfying \eqref{bic0} (this is a special $(-1)$-Cartan structure in the sense of~\cite{perro}), and the Reeb field $R$ is at the intersection of the contact planes $\ker \eta_1$ and $\ker \eta_2$.  Moreover, by combining~\eqref{nablaeiR} and~\eqref{nablaRei} we get $[R, e_1]=\lambda e_2$ and $[R, e_2]=\lambda e_1$, so the property~\eqref{bic1} follows by evaluating it on the orthonormal basis $\{R, e_1, \e_2\}$.

Finally, since $\left\{\alpha, \eta_1, \eta_2\right\}$ is a global orthonormal co-frame, it follows that the metric $g$ has the expression~\eqref{g_eta}, which completes the proof of the lemma.

In the case of non-trivial eigenspace line bundles $\ell_1$ and $\ell_2$,
we take pairs of local framings  $(e_1, e_2)$ and  $-(e_1, e_2)= (-e_1, -e_2)$.
Then, according to a local choice of $\pm(e_1, e_2)$,  $\pm(\eta_1, \eta_2)$
is chosen. It is then straightforward to see that all the computations above remain the same if $(e_1, e_2)$ is replaced by $-(e_1, e_2)$.
\end{proof}

Let $(M,\alpha)$ be a closed contact $3$-manifold and assume that $g$ is a critical compatible metric. Consider the function $\Lambda:=\lambda^2 \in C^\infty(M)$ as defined before. If $\Lambda\equiv0$ then Equation~\eqref{eq.lam} implies that $\mathcal{L}_R g=0$, $R$ is then a Killing field of $g$, so by definition $g$ is a Sasakian metric. Accordingly, the theorem is proved in this case and from now on we can assume that $\Lambda$ is not identically zero.

If the zero set of $\Lambda$,
\[
Z_\Lambda:=\{p\in M:\Lambda(p)=0\}
\]
is empty (a case which is considered by Perrone in~\cite[Theorem 3.1]{perr}), then, by Lemma~\ref{bi-contactsup}, $R$ is supported by a $C^\infty$ bi-contact structure, and therefore by~\cite{mitsu}, it is a conformally (or projectively) Anosov field. Since any volume-preserving conformally Anosov flow is in fact Anosov (see e.g.~\cite[Corollary 1.3.]{hozo1}), and $R$ preserves the volume form $\alpha\wedge d\alpha$, we conclude that $R$ is Anosov. Recalling that the function $\lambda^2$ is a first integral of $R$ by Lemma~\ref{1stintegral}, and no Anosov flow can exhibit non-constant first integrals~\cite{Agui}, we also infer that the function $\lambda^2$ is constant everywhere; in particular, this ensures that the supporting bi-contact structure $(\zeta_1, \zeta_2)$ of $R$ is a calibrated one.

Let us now prove that $R$ is $C^\infty$-conjugate to an algebraic Anosov flow. Assume that we are in the case where the framing $(e_1, e_2)$ is global. Defining the smooth vector fields 
 $e_s:=\tfrac{1}{\sqrt{2}}(e_1+e_2)$ and $e_u:=\tfrac{1}{\sqrt{2}}(e_1-e_2)$, we deduce from Equation~\eqref{bic1} that:
$$
[R, e_s] = \lambda e_s\,, \qquad [R, e_u] = -\lambda e_u\,,
$$
so $e_s$ and $e_u$ span the strong stable and unstable Anosov directions; recall that $e_1=\eta_1^{\sharp_g}$ and $e_2=\eta_2^{\sharp_g}$ and $\la$ is constant. In particular, the strong stable and unstable foliations which are generated by the vector fields $e_s$ and $e_u$, respectively, are smooth.
Since $d\alpha(e_u, e_s)=2$, we infer that
$$[e_s, e_u]=2R + f_s e_s + f_u e_u\,,$$
for some smooth functions $f_s$ and $f_u$ on $M$.
Taking the time $t$ flow $\phi_t=\mathrm{exp} (tR)$ of the Reeb vector field $R$,
these two functions satisfy (as in~\cite[Theorem A]{Green})
$$
[e_s, e_u] = [e^{-t\lambda}e_s, e^{t\lambda}e_u] = {\phi_t}_*[e_s, e_u]
= 2R + e^{-t\lambda}f_s\circ\phi_{-t} \, e_s +
e^{t\lambda}f_u\circ\phi_{-t} \, e_u\,,
$$
and thus we have  $f_s=e^{-t\lambda}f_s\circ\phi_{-t}$
and $f_u=e^{t\lambda}f_u\circ\phi_{-t}$ for all $t\in\RR$.
This immediately implies $f_s=f_u\equiv0$, and therefore we obtain the relations
$$
[R, e_s] = \lambda e_s\,, \quad [R, e_u] = -\lambda e_u\,,
\quad [e_s, e_u] = 2R\,,
$$
with $\lambda$ a positive constant, which define a $C^\infty$ basis for the Lie algebra $\mathfrak{sl}(2;\RR)$
with a hyperbolic element $R$. This allows us to conclude that $R$ is not only orbitally conjugate to an algebraic model, but it is actually $C^\infty$-conjugate.  

Assume now that we do not have a global $(e_1, e_2)$. Given a pair of local framings $(e_1, e_2)$ and  $-(e_1, e_2)$ we
define the positive/negative pair of local framings of vector fields
$(e_s, e_u)$ and  $(-e_s, -e_u)$ by
$e_s:=\tfrac{1}{\sqrt{2}}(e_1+e_2)$ and $e_u:=\tfrac{1}{\sqrt{2}}(e_1-e_2)$
and by
$-e_s:=\tfrac{1}{\sqrt{2}}(-e_1-e_2)$ and $-e_u:=\tfrac{1}{\sqrt{2}}(-e_1+e_2)$.
Then, setting the functions $f_s$ and $f_u$ for $(-e_s, -e_u)$ to be
 $-f_s$ and $-f_u$, the argument remains the same.
 
This completes the proof in the case $Z_\Lambda = \emptyset$.

The last case to treat is $Z_\Lambda\neq \emptyset$. We claim that this case cannot occur. Indeed, consider the open set $ M\setminus Z_\Lambda$, and take a connected component $U$ (open by definition). The function
$$\psi:=\Lambda|_U$$
is not a constant (otherwise it would be 0) and it is positive; obviously $\psi\in C^\infty(U)$. By Sard's theorem, $\psi$ has a full measure set of regular values so let us take a regular value $c>0$. By the preimage theorem,
$$\Sigma:=\psi^{-1}(c)$$
is a surface, which is compact because it cannot approach the boundary of $U$ (otherwise $c=0$). We can safely assume that $\Sigma$ is connected; if not, just take one of its components. Since the (non-vanishing) Reeb field $R$ is tangent to the level sets of $\psi$ by Lemma~\ref{1stintegral}, $\Sigma$ is diffeomorphic to a torus or a Klein bottle (because the Euler characteristic is $\chi(\Sigma)=0$ by the Poincar\'e-Hopf theorem). In addition, $\Sigma$ is cooriented by the gradient $\nabla \psi \neq 0$, so finally we deduce that $\Sigma \cong \Tt^2$.

Since $\nabla\psi \neq 0$ on $\Sigma$, and $\Sigma$ is compact, $\nabla\psi \neq 0$ in a neighbourhood of $\Sigma$. The implicit function theorem then easily implies that neighbouring level sets are also diffeomorphic to $\Tt^2$, so there is a compact set $N \subset U$ that is diffeomorphic to $\Tt^2 \times [c-\delta,c+\delta]$ and is fibred by the level sets of the function $\psi$.

Now we apply Lemma~\ref{bi-contactsup} to the compact contact $3$-manifold $N$ and we deduce that the Reeb field $R|_{N}$ is supported by a bi-contact structure. We can then use the following lemma, which is a local version of a conformally Anosov characterization of vector fields with supporting bi-contact structures~\cite{mitsu}. We conclude that $R|_N$ is a conformally Anosov flow that does not preserve a volume, which contradicts the fact that $R|_N$ preserves the volume form $\alpha\wedge d\alpha|_N$. This contradiction means that the set $Z_{\Lambda}$ must be empty and hence $\Lambda$ is a positive
constant on $M$, thus completing the proof of the first part of the main theorem.

\begin{lemma}\label{localm}
Let $N$ be a compact $3$-manifold with smooth boundary.
If we have a bi-contact structure on $N$,
and the vector field at the intersection of the two contact bundles
is tangent to $\partial N$,  then it is conformally Anosov.
However, the flow is not Anosov and, in particular, it does not preserve a volume.
\end{lemma}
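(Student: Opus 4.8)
The plan is to establish the three assertions in the order in which they depend on one another: first that $X$ is conformally Anosov, then that $X$ is \emph{not} genuinely Anosov, and finally deduce the absence of an invariant volume from these two facts. The whole point is that the boundary is harmless for the first assertion but is exactly what obstructs genuine Anosovity.

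For conformal Anosovity I would simply transcribe Mitsumatsu's argument~\cite{mitsu} to the bounded setting. Since $X$ is tangent to $\partial N$ and $N$ is compact, the flow $\varphi_t$ of $X$ is complete and carries $N$ into itself, so no orbit ever reaches the boundary. The bi-contact pair $(\eta_1,\eta_2)$ provides two transverse plane fields and an associated cone field (the cone squeezed between $\ker\eta_1$ and $\ker\eta_2$); the contact conditions $\eta_i\wedge d\eta_i\neq0$ with opposite signs force this cone to be mapped strictly inside itself in forward time, and the complementary cone in backward time. As these are local, uniform-on-the-compact-set $N$ estimates, they yield a dominated splitting $TN=E^s\oplus E^u\oplus\operatorname{span}\{X\}$ with exponential domination, i.e.\ $X$ is conformally Anosov; the boundary plays no role because the cone argument never leaves $N$.

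The hard part will be showing that $X$ is \emph{not} Anosov, and this is where I expect the main obstacle. I would argue by contradiction: suppose $X$ is genuinely Anosov on $N$, with uniformly hyperbolic splitting $E^s\oplus E^u\oplus\operatorname{span}\{X\}$ and constant $\La\in(0,1)$. Each boundary component $T\subset\partial N$ is a closed $\varphi_t$-invariant surface on which $X$ is nonsingular, so $\chi(T)=0$ by Poincar\'e--Hopf and $T$ is a torus or a Klein bottle. The tangent plane field $TT$ is an invariant $2$-plane containing $X$; intersecting it with the invariant plane $E^s\oplus E^u$ yields a continuous invariant line field along $T$ transverse to $X$, and by uniqueness of the hyperbolic splitting (any flow-transverse invariant line must be $E^s$ or $E^u$) this line is $E^s$ or $E^u$. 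Thus $TT=E^u\oplus\operatorname{span}\{X\}$ or $TT=E^s\oplus\operatorname{span}\{X\}$ along $T$; reversing time if necessary, assume $E^u$ is tangent to $T$ and hence uniformly expanded. Because $X$ is nonsingular on the compact surface $T$ (so $|X|$ is bounded below) and the angle between $E^u$ and $\operatorname{span}\{X\}$ is bounded below by transversality, the Jacobian of $\varphi_t|_T$ with respect to the Riemannian area measure $\mu_T$ satisfies $\det(d\varphi_t|_{TT})\ge c\,\La^{-t}$ uniformly on $T$ for $t>0$. But $\varphi_t|_T$ is a diffeomorphism of $T$, so
\[
\mu_T(T)=\mu_T(\varphi_t(T))=\int_T \det(d\varphi_t|_{TT})\,d\mu_T\ \ge\ c\,\La^{-t}\,\mu_T(T)\ \longrightarrow\ \infty\qquad(t\to\infty),
\]
which is absurd since $\mu_T(T)<\infty$. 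Hence no boundary component can be a weak (un)stable leaf, contradicting the previous paragraph and proving that $X$ is not Anosov. (Equivalently, $\varphi_t|_T$ would uniformly expand a tangent line field on a closed surface, which is impossible since nonsingular surface flows have zero topological entropy.)

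Finally, for the non-existence of an invariant volume I would invoke the equivalence used earlier in the paper: every volume-preserving conformally Anosov flow is Anosov~\cite[Corollary 1.3]{hozo1}. Its contrapositive, applied to our $X$ (conformally Anosov by the first step, not Anosov by the second), shows at once that $X$ preserves no volume form — which is exactly the conclusion needed in the application. The only delicate point in this last step is that the cited equivalence is stated for closed manifolds, so I would note that its proof rests on cone and volume estimates that are interior and uniform, and therefore remain valid on the compact $N$ with $X$ complete and tangent to $\partial N$. The genuine mathematical difficulty, however, is concentrated in the area-blow-up argument ruling out Anosovity via the invariant boundary torus.
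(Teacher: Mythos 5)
Your first and third steps coincide with the paper's proof: conformal Anosovity is obtained by running Mitsumatsu's argument~\cite{mitsu} on the compact $N$ (tangency to $\partial N$ gives completeness of the flow, compactness gives the uniform estimates), and the non-existence of an invariant volume is deduced at the end from ``volume-preserving $+$ conformally Anosov $\Rightarrow$ Anosov''~\cite[Corollary~1.3]{hozo1}, exactly as in the paper. Where you genuinely diverge is the middle step: the paper rules out Anosovity in one line, by invoking the classical fact that the only surfaces invariant under an Anosov flow are weak (un)stable leaves, which are never compact and so cannot be boundary tori; you instead aim for a self-contained, quantitative area-growth contradiction on an invariant boundary component. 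That idea is sound, but as written it has a genuine gap.

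The gap is the reduction ``by uniqueness of the hyperbolic splitting, any flow-transverse invariant line must be $E^s$ or $E^u$''. Uniqueness of the hyperbolic splitting says only that the decomposition $E^s\oplus E^u\oplus\operatorname{span}\{X\}$ is unique; it does \emph{not} imply that every continuous invariant line field inside $E^s\oplus E^u$ coincides with one of the factors, and that stronger statement is false in general. For instance, on the closure of a heteroclinic orbit running from a periodic orbit $\gamma_1$ to a periodic orbit $\gamma_2$, one can propagate by invariance a line that has nonzero components in both $E^s$ and $E^u$: hyperbolicity makes it converge to $E^u$ in forward time and to $E^s$ in backward time, which is precisely what continuity demands, so one obtains a continuous invariant line field equal to $E^s$ on $\gamma_1$, to $E^u$ on $\gamma_2$, and to neither in between. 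A priori the line field $L=TT\cap(E^s\oplus E^u)$ on your invariant torus $T$ could exhibit the same mixed behaviour ($L=E^s$ on one closed invariant subset, $L=E^u$ on another, neither on the open set in between), and your ``reversing time if necessary, assume $E^u$ is tangent to $T$'' does not cover that case.

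Fortunately, your own area integral repairs this without any structural claim. Since $\varphi_t(T)=T$, one has $\int_T \det\bigl(d\varphi_t|_{T_pT}\bigr)\,d\mu_T(p)=\mu_T(T)$ for all $t$. At any point $p$ where $L_p\neq E^s_p$, the nonzero $u$-component forces $\det\bigl(d\varphi_t|_{T_pT}\bigr)\to\infty$ as $t\to\infty$ (the $X$-factor and the angles are bounded on the compact $T$); by Fatou's lemma the set $\{p\in T: L_p\neq E^s_p\}$ must be $\mu_T$-null, and being open it is empty. Hence $T_pT=E^s_p\oplus\operatorname{span}\{X_p\}$ everywhere on $T$, but then $\det\bigl(d\varphi_t|_{T_pT}\bigr)\leq C\Lambda^t\to 0$ uniformly, and the same identity gives $\mu_T(T)\leq C\Lambda^t\,\mu_T(T)\to 0$, a contradiction. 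With this correction your second step is valid and stands as a legitimate, more elementary alternative to the paper's appeal to the classification of invariant surfaces of Anosov flows.
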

\begin{proof} We just recall the arguments
in~\cite{mitsu} to prove the first part.
Since the vector field is tangent to the boundary,
it defines a complete flow and we can take the time-$1$ map $\Phi:=\varphi_{1}$ of the flow $\varphi_t$ generated by the field, which is a global $C^\infty$ diffeomorphism of $N$.
The bi-contact structure gives
a global frame $\{e_1,e_2\}$
on the normal bundle $\nu\varphi =TN/T\varphi$ to the flow.
Fix such a frame and consider the bundle automorphism $\tilde\Phi$
induced from the linear map $D\Phi$.
We can adjust $\tilde\Phi$ by pointwise positive scalar multiplication
in such a way that,
with respect to the frame $\{e_{1},e_2\}$,
$$\tilde\Phi_{x}:\nu\varphi_{x}\to\nu\varphi_{\varphi_{1}(x)}$$
is represented by a matrix of $\mathit{SL}(2;\RR)$ for each $x\in N$.
The compactness of $N$ implies that there exists a positive constant
$c>0$ such that any of the entries of the matrix representing
$\tilde\Phi_{x}$ is greater than $c$ for any $x\in N$.
Then the lemma for the trace of the iterated dynamics in $\mathit{SL}(2;\RR)$ proved in~\cite[Section~4]{mitsu} applies and we infer that
$$\mathit{tr}\tilde\Phi^{k}_{x}\to \infty$$
when $k \to \infty$. This shows that by the action of the flow, the contact structures are flown
to converge to an invariant plane field, thus proving that the intersection field is conformally Anosov.

Finally, if the flow were Anosov, then the only invariant
surfaces would be the weak stable or weak unstable leaves of the Anosov foliations, which cannot be
the boundary tori, so we conclude that the flow cannot be Anosov, nor it cannot preserve a volume (because any conformally Anosov flow that is volume-preserving is necessarily Anosov). This completes the proof of the lemma.
\end{proof}

\subsection{The proof of the sufficiency part}\label{SS.conv}
Let $(M, \alpha)$ be a closed contact 3-manifold. Let us show that if $R$ is supported by a $C^\infty$ calibrated bi-contact structure, then there exists a critical ($C^\infty$) metric compatible with $\alpha$.

Assume that $R$ is supported by the calibrated bi-contact structure $(\zeta_1, \zeta_2)$ with constant
$\lambda$ and global defining 1-forms $(\eta_1, \eta_2)$, so that we have $\alpha \wedge \eta_1 \wedge \eta_2 = \Omega$ and
\begin{equation}\label{4omega}
\begin{split}
&\eta_1 \wedge d \eta_1 = \lambda\Omega\,, \qquad \eta_2 \wedge d \eta_2 = - \lambda\Omega\,, \\
& \eta_1 \wedge d \eta_2= 0\,, \qquad \eta_2 \wedge d \eta_1 = 0\,.
\end{split}
\end{equation}
Consider the $C^\infty$ Riemannian metric
$$g:= \alpha\otimes\alpha + \eta_1\otimes\eta_1 + \eta_2\otimes\eta_2\,,$$
whose volume element is $\mathrm{vol}_g=\alpha \wedge \eta_1 \wedge \eta_2 = \Omega$, by assumption. Since in addition $\abs{\alpha}_g = 1$ we deduce that
$$\ast_gd \alpha = 2\alpha\,,$$
so $g$ is a metric compatible with the contact form $\alpha$.

We claim that $g$ is a critical compatible metric. To this end, let us evaluate the relations \eqref{4omega} on the (positive) orthonormal frame $\{R, e_1, e_2\}$, $g$-dual to $\{\alpha, \eta_1, \eta_2\}$. We easily obtain
$$\eta_1([R, e_2])=\eta_2([R, e_1])=\lambda\,,$$
and
$$\eta_1([R, e_1])=0\,,\qquad \eta_2([R, e_2])=0\,.$$
Since we also have
$$g([R, e_i], R)=g(\nabla_Re_i,R)-g(\nabla_{e_i}R,R)=0$$
for $i=1,2$, where we have used that $g(\nabla_Re_i,R)=-g(e_i,\nabla_RR)=0$ and $g(\nabla_{e_i}R,R)=\frac12\nabla_{e_i}|R|^2_g=0$, we finally obtain
\begin{equation}\label{structureq}
\begin{split}
&[R, e_1] = \lambda e_2\,, \\
&[R, e_2] = \lambda e_1\,.
\end{split}
\end{equation}

Next, let us define a smooth $(1,1)$-tensor $\phi$ by
$$\phi R=0\,,\qquad \phi e_1 = -e_2\,,\qquad \phi e_2 = e_1\,.$$
It is easy to check that it satisfies Equation~\eqref{assocmet}, so $\phi$ is an endomorphism associated to the compatible metric $g$. Moreover we can compute the $(1,1)$-tensor $h=\frac12\mathcal L_R\phi$ using~\eqref{structureq}, and we obtain
\begin{equation}\label{hconverse}
hR = 0, \qquad he_1=-\lambda e_1\,, \qquad he_2 = \lambda e_2\,,
\end{equation}
from which we immediately deduce $\abs{h}_g^2=2\lambda^2$.
Injecting Equations~\eqref{structureq} and~\eqref{nablaReeb} in the right hand side of
$\nabla_R e_i = [R, e_i] + \nabla_{e_i} R$, and using Equation~\eqref{hconverse}, we finally obtain:
$$
\nabla_R e_1= e_2 , \qquad \nabla_R e_2 = -e_1\,.
$$
Together with~\eqref{hconverse}, this allows us to compute:
$$
(\nabla_R h)e_1= \nabla_R he_1 - h\nabla_R e_1= \nabla_R (-\lambda e_1) - h e_2= -R(\lambda)e_1 -2 \lambda e_2=  -R(\lambda)e_1 +2h\phi e_1,
$$
and analogously: $(\nabla_R h)e_2= R(\lambda)e_2 +2h\phi e_2$. We then infer that Tanno's equation~\eqref{critmetric} is satisfied along $e_1$ and $e_2$ because $R(\lambda)=0$ (recall that $\lambda$ is a constant by the definition of calibrated bi-contact structure) and it is trivially satisfied along $R$. This shows that $g$ defined previously is a critical compatible metric for $\alpha$. 

In the case where the calibrated bi-contact structure is defined by local $\pm (\eta_1, \eta_2)$ the local computations remain the same and the metric $g$ defined above is still global because it is independent of the choice of sign. This completes the proof of the equivalence in the main theorem.

We finally observe that if the Reeb field of $(M,\alpha)$ is $C^\infty$-conjugate to an algebraic Anosov flow modeled on $\widetilde{SL}(2,\mathbb R)$ then it is supported by a calibrated bi-contact structure, and hence there exists a critical compatible metric. This is the content of the following:

\begin{remark}\label{R.alg}
Let us start with the genuine algebraic case of $\widetilde{SL}(2, \RR)$. For an arbitrary constant $\lambda>0$, in the Lie algebra $\mathfrak{sl}(2, \RR)$ seen as the tangent space of $SL(2, \RR)$ at the identity, consider the basis:
$$
R :=\frac{\lambda}{2} \left(
\begin{array}{cc}
 1 & 0 \\
 0 & -1 \\
\end{array}
\right)\,, \quad
e_1 := \sqrt{\frac{\lambda }{2}} \left(
\begin{array}{cc}
 0 & 1 \\
 1 & 0 \\
\end{array}
\right)\,, \quad
e_2 := - \sqrt{\frac{\lambda }{2}} \left(
\begin{array}{cc}
 0 & -1 \\
 1 & 0 \\
\end{array}
\right)\,,
$$
which satisfies the commutation relations
$$
[R, e_1]=\lambda e_2\,, \quad [e_1, e_2]=-2 R\,, \quad [e_2, R]=-\lambda e_1\,.
$$
Promote this basis to a global frame on $SL(2, \RR)$ by left translation and consider a (left invariant) metric $g$ for which this frame is orthonormal.  For the dual co-frame $\{\alpha,\eta_1, \eta_2\}$ the comutation relations yield:
\begin{equation}\label{SL2algebra}
d\alpha= 2 \eta_1 \wedge \eta_2\,, \quad
d\eta_1= -\lambda \alpha \wedge \eta_2\,, \quad
d\eta_2= -\lambda \alpha \wedge \eta_1\,,
\end{equation}
which implies, in particular, that $\alpha$ is a contact form with associated Reeb field $R$ supported by the calibrated bi-contact structure $(\eta_1, \eta_2)$.  In view of the above proof, $g$ is a critical compatible metric, cf. also  \cite[Proposition 4.2]{perr}. Since this construction is left invariant, it is inherited by any compact quotient of $\widetilde{SL}(2, \RR)$. Finally, if a Reeb-Anosov flow is $C^\infty$-conjugate to this algebraic model up to covering, then the construction of a supporting calibrated bi-contact structure is immediate (by pulling-back the contact forms $(\eta_1,\eta_2)$ using the diffeomorphism of the conjugation), and a critical compatible metric then exists.
\end{remark}

\subsection{Global minimizers}\label{SS.min}

In this final section we prove the last statement of Theorem \ref{T.main}, which strengthens the stability property (local minimality) of critical metrics found by Deng \cite{deng}. In view of the results established previously, the proof is essentially a computation of the Chern-Hamilton energy for the case that the Reeb field is supported by a calibrated bi-contact structure.

\begin{proposition}
Let $g$ be a critical compatible metric on $(M,\alpha)$. Then it is a global minimizer of the Chern-Hamilton energy functional.
\end{proposition}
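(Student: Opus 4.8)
The plan is to split along the two cases of Theorem~\ref{T.main}. Having normalized $\theta=2$, every compatible metric shares the volume form $\mathrm{vol}_{g'}=\Omega$ and the total volume $V:=\int_M\Omega$, and by \eqref{eq.lam} the energy of any compatible metric $g'$ is
\[
E(g')=\int_M|\mathcal{L}_Rg'|^2_{g'}\,\Omega=8\int_M(\lambda')^2\,\Omega,
\]
where $(\lambda')^2$ is the function attached to $g'$ in Section~\ref{S.prelim}. If the critical metric $g$ is Sasakian then $E(g)=0$ and nothing is to prove; so by Theorem~\ref{T.main} I may assume $R$ is Anosov, that $g$ satisfies \eqref{bic0}--\eqref{g_eta} with $\lambda$ a positive constant, and that the strong unstable bundle $E^u=\operatorname{span}\{e_u\}$ is smooth with $e_u\in\zeta=\ker\alpha$. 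Thus $E(g)=8\lambda^2V$, and the goal is the bound $E(g')\ge 8\lambda^2V$ for every competing compatible metric $g'$.

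The core is a pointwise estimate for the infinitesimal unstable expansion rate. Fix $g'$ with associated tensors $\phi',h'$; the eigenvalues of $h'$ are $0,\pm\lambda'$, so $\|h'\|_{\mathrm{op}}=\lambda'$ everywhere. Since $E^u$ is a smooth $R$-invariant line field with $e_u\in\zeta$, and $R\perp_{g'}\zeta$ for any compatible metric, the field $Y_u:=e_u/|e_u|_{g'}$ is a smooth $g'$-unit section of $E^u$ with $Y_u\perp_{g'}R$, and $[R,Y_u]=-\chi_{g'}Y_u$ for a smooth function $\chi_{g'}$. Using $[R,Y_u]=\nabla'_RY_u-\nabla'_{Y_u}R$, the identity $g'(\nabla'_RY_u,Y_u)=\tfrac12R|Y_u|^2_{g'}=0$, and \eqref{nablaReeb} in the form $\nabla'_{Y_u}R=-\phi'Y_u-\phi'h'Y_u$, I would compute
\[
\chi_{g'}=-g'([R,Y_u],Y_u)=g'(\nabla'_{Y_u}R,Y_u)=-g'(\phi'h'Y_u,Y_u),
\]
since $g'(\phi'Y_u,Y_u)=0$. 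As $\phi'$ is a $g'$-isometry of $\zeta$ and $h'Y_u\in\zeta$, Cauchy--Schwarz gives the pointwise inequality $\chi_{g'}\le|\phi'h'Y_u|_{g'}=|h'Y_u|_{g'}\le\lambda'$.

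Next I would show that $\int_M\chi_{g'}\,\Omega$ is independent of the compatible metric. Passing from $g'$ to the critical $g$ replaces the $g'$-unit section $Y_u$ by the $g$-unit section $e_u=fY_u$, with $f:=|e_u|_{g'}$ smooth and positive; a direct computation then gives $\chi_{g'}=\chi_g+R(\log f)$. Because $\mathcal{L}_R\Omega=0$, one has $R(\log f)\,\Omega=\mathcal{L}_R\big((\log f)\Omega\big)=d\,i_R\big((\log f)\Omega\big)$, so $\int_MR(\log f)\,\Omega=0$ by Stokes' theorem and hence $\int_M\chi_{g'}\,\Omega=\int_M\chi_g\,\Omega$. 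For the critical metric the frame is aligned with $E^s\oplus E^u$: by the proof of Lemma~\ref{bi-contactsup} one has $[R,e_u]=-\lambda e_u$, so $\chi_g\equiv\lambda$ and $\int_M\chi_g\,\Omega=\lambda V$. Combining with the pointwise bound yields $\int_M\lambda'\,\Omega\ge\int_M\chi_{g'}\,\Omega=\lambda V$. A final application of Cauchy--Schwarz on $(M,\Omega)$ gives
\[
\int_M(\lambda')^2\,\Omega\ \ge\ \frac{1}{V}\Big(\int_M\lambda'\,\Omega\Big)^2\ \ge\ \lambda^2V,
\]
whence $E(g')=8\int_M(\lambda')^2\,\Omega\ge 8\lambda^2V=E(g)$, which is the claim.

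The main obstacle is the tension between the metric-dependent eigenframe of $h'$ and the intrinsic Anosov bundle $E^u$: the estimate $\chi_{g'}\le\lambda'$ is purely algebraic and pointwise, but it only becomes useful once $\int_M\chi_{g'}\,\Omega$ is recognized as a flow invariant, and this step relies crucially on $E^u$ being a \emph{smooth}, $R$-invariant line field inside $\zeta$ --- precisely the regularity established for the Anosov case earlier in this section. (Were $E^u$ only of class $C^1$, I would instead deduce $\int_MR(\log f)\,\Omega=\frac{d}{dt}\big|_{0}\int_M(\log f)\circ\varphi_{-t}\,\Omega=0$ directly from the $\varphi_t$-invariance of $\Omega$, reaching the same conclusion.)
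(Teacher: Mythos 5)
Your proof is correct, and it takes a genuinely different route from the paper's. The paper's own argument is computational: using Lemma~\ref{bi-contactsup} it writes an arbitrary competitor in the Anosov coframe as $\widetilde g=\alpha\otimes\alpha+p\,\eta_u\otimes\eta_u+r(\eta_u\otimes\eta_s+\eta_s\otimes\eta_u)+q\,\eta_s\otimes\eta_s$ with $pq-r^2=1$, computes $|\mathcal{L}_R\widetilde g|^2_{\widetilde g}$ explicitly from $\mathcal{L}_R\eta_u=\lambda\eta_u$ and $\mathcal{L}_R\eta_s=-\lambda\eta_s$, and, after discarding $\int_M R(\ln p)\,\mathrm{vol}_g=0$, exhibits $E(\widetilde g)-E(g)$ as the integral of a sum of squares; as a by-product this normal form also settles the equality case ($r=0$, $p$ a positive constant, $q=1/p$), i.e., it describes all global minimizers. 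You bypass any parametrization of the space of compatible metrics: you bound the unstable expansion rate $\chi_{g'}$ pointwise by $\lambda'$ using only \eqref{nablaReeb} and Cauchy--Schwarz (your claim $\|h'|_{\zeta}\|_{\mathrm{op}}=\lambda'$ does hold everywhere, not only on the dense set $\mathcal N$, since $h'$ is $g'$-symmetric and traceless, preserves $\zeta$, and $|h'|^2_{g'}=2(\lambda')^2$ globally), then observe that $\int_M\chi_{g'}\,\Omega$ is metric-independent because changing the metric changes $\chi$ by the coboundary $R(\log f)$, which integrates to zero against the $R$-invariant volume, and you finish with Cauchy--Schwarz on $(M,\Omega)$; the final squaring is licit because $\int_M\lambda'\,\Omega\ge\lambda\Vol(M)>0$. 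Your approach is more conceptual --- it identifies the minimal energy $8\lambda^2\Vol(M)$ with a purely dynamical invariant, the averaged expansion rate of $E^u$ --- it needs only the intrinsic tensors $\phi',h'$ of the competitor (so you never have to justify the paper's normal form for compatible metrics), and, as you note, it tolerates lower regularity of the unstable bundle. What it does not yield directly is the characterization of the minimizers, which the paper's explicit completing-the-square computation provides for free.
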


\begin{proof}
Obviously, the Sasakian metrics are global minima of the Chern-Hamilton functional (because $\mathcal L_Rg=0$). Then let us focus on the other class of critical compatible metrics stated in Theorem~\ref{T.main}, which corresponds to the case $\lambda=c$ for some positive constant $c$. We follow the notation introduced in Section~\ref{SS.dir} without further mention. Let $g$ be a critical compatible metric on $(M,\alpha)$ and $(\zeta_1,\zeta_2)$ the (calibrated) bi-contact structure obtained in Lemma~\ref{bi-contactsup}. Assume that the defining contact 1-forms $(\eta_1, \eta_2)$ are global with dual frame $(e_1,e_2)$. The other case will follow easily as in the previous subsection. On the contact distribution $\ker \alpha$ with the frame given by $e_s:=\frac{1}{\sqrt 2}(e_1 + e_2)$ and $e_u:=\frac{1}{\sqrt 2}(e_1 - e_2)$, we consider the dual co-frame $\{\eta_u,\eta_s\}$, so that the expression~\eqref{g_eta} of the critical compatible metric $g$, cf. Lemma~\ref{bi-contactsup}, translates into
$$g =\alpha \otimes \alpha +
\eta_u \otimes \eta_u + \eta_s \otimes \eta_s \, .$$
It is easy to check that the value of the Chern-Hamilton energy for this metric is
$$E(g)=8\lambda^2 \Vol(M)\,,$$
where $\lambda=c>0$ is a constant.

Now, we notice that a general Riemannian metric compatible with the contact form $\alpha$ has the following expression:
$$
\widetilde{g} = \alpha \otimes \alpha + p\eta_u \otimes \eta_u  + r(\eta_u \otimes \eta_s + \eta_s \otimes \eta_u) + q\eta_s \otimes \eta_s\,,
$$
where $p$, $q$ and $r$ are $C^\infty$ functions on $M$ satisfying $p>0$, $q>0$ and $pq-r^2=1$. This last condition ensures that $\mathrm{vol}_{\tilde g} =\mathrm{vol}_{g}$ (as both metrics are compatible with $\alpha$). It then follows from Lemma~\ref{bi-contactsup} and the definition of $\eta_u$ and $\eta_s$ that $\cL_R \alpha = 0$, $\cL_R \eta_u = \lambda \eta_u$ and $\cL_R \eta_s = -\lambda \eta_s$. Accordingly, in terms of the co-frame $\{\alpha,\eta_u,\eta_s\}$, we easily obtain the following expressions
$$
\cL_R \widetilde{g} =
\begin{pmatrix}
0 & 0 & 0 \\
0 & R(p)+2\lambda p & R(r) \\
0 & R(r) & R(q) - 2\lambda q \\
\end{pmatrix} \,\,\,\,\,\,
{\mathrm{and}} \,\,\,\,\,\,
\widetilde{g} =
\begin{pmatrix}
1 & 0 & 0 \\
0 & p & r \\
0 & r & q \\
\end{pmatrix}\,,
$$
where, as usual, $R(f)$ denotes the action for the Reeb vector field $R$ on the function $f\in C^\infty(M)$.
With these expressions the energy density is given as
\[
|\cL_R \widetilde{g}|_{\widetilde g}^2
=\tr(\cL_R \widetilde{g} \cdot \widetilde{g}^{-1} \cdot \cL_R \widetilde{g}
\cdot  \widetilde{g}^{-1})
=:8\tilde\lambda^2
\,.
\]
Then by a direct 
computation, we obtain
$$
\tilde\lambda^2 = \left(r^2+1\right)\lambda ^2 +\left[r R(r)-  p R(q)\right]\lambda +
\tfrac{1}{4} \left[R(r)^2-R(p) R(q)\right]\,.
$$
Substituting $q$ and $R(q)$ from the identity $pq-r^2=1$, we get
$$
\tilde\lambda^2 = \left(r^2+1\right)\lambda ^2 +
\left((r^2+1) R(\ln p)- rR(r)\right)\lambda +
\tfrac{1}{4}\left(r R(\ln p)-R(r)\right)^2+\tfrac{1}{4} R(\ln p)^2\,.
$$
Finally, since $\int_M R(\ln p) \mathrm{vol}_g = 0$ (due to the fact that $\Div_g R = 0$), we infer
\begin{equation*}
\begin{split}
\tfrac{1}{8}(E(\widetilde{g}) - E(g)) = & \int_M ({\tilde \lambda}^2 -\lambda^2) \mathrm{vol}_g \\
= & \int_M \! \big\{r^2\lambda ^2 +
r\left(r R(\ln p)- R(r)\right)\lambda +
\tfrac{1}{4}\left(r R(\ln p)-R(r)\right)^2+\tfrac{1}{4} R(\ln p)^2 \big\} \mathrm{vol}_g\\
= & \int_M \! \big\{ \left[r\lambda + \tfrac{1}{2}\left(r R(\ln p)-R(r)\right)\right]^2+\tfrac{1}{4} (R(\ln p))^2 \big\} \mathrm{vol}_g  \geqslant 0\,,
\end{split}
\end{equation*}
thus showing that indeed the Chern-Hamilton energy has a global minimum at $g$. In fact, the equality in this expression is achieved if and only if $r=0$, $p=\kappa>0$ a positive constant, and $q=1/\kappa$. This follows from the fact that $R$ is Anosov (and hence it does not admit any non-trivial first integrals), and that the equation $2\lambda r=R(r)$ only admits the solution $r=0$ because
\[
\int_M r^2 \mathrm{vol}_g = \frac{1}{4\lambda}\int_M R(r^2)\mathrm{vol}_g=0\,,
\]
where we have used that $\Div_g R = 0$. This completes the proof of the proposition.
\end{proof}

\begin{remark}[Some curvature properties of critical metrics] It is not hard to check that the sectional curvatures along the planes $span\{R, e_s\}$ and $span\{R, e_u\}$ are both equal to $1 - \lambda^2$. In particular, if $\lambda=1$ they are both zero.
This should be compared with the Chern-Hamilton original requirement for critical metrics: ``the sectional curvature of all planes at a given point perpendicular to the contact bundle $\ker\alpha$ are equal'' (cf. their conjecture in~\cite{ch}). Additionally, in the Anosov case, $span\{R, e_s\}$ and $span\{R, e_u\}$ are each tangent to a foliation, and the leaves of this foliation are minimal, i.e., their mean curvature is $0$.
\end{remark}


\section{Applications}\label{S.appli}

In this final section we present some easy applications of Theorem~\ref{T.main}.
 A first straightforward consequence of our main theorem is:
\begin{corollary}\label{coroNONO}
Let $M$ be a closed $3$-manifold which is neither Sasakian nor it does support algebraic Anosov flows that are not suspensions. Then no contact structure on $M$ admits a critical compatible metric. The topological classification of these manifolds appears in Remark~\ref{detailsMain}.
\end{corollary}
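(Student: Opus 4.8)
The plan is to obtain Corollary~\ref{coroNONO} as a direct contrapositive of Theorem~\ref{T.main}, the only non-formal ingredient being the classification of algebraic Anosov flows in dimension three. Concretely, I would argue by contradiction: suppose some contact structure $\zeta$ on $M$ admits a defining contact form $\alpha$ together with a critical compatible metric $g$. By Theorem~\ref{T.main}, either $M$ supports a Sasakian metric, or the associated Reeb flow is $C^\infty$-conjugate to an algebraic Anosov flow modeled on $\widetilde{SL}(2,\RR)$ and $M$ is diffeomorphic to a compact quotient of $\widetilde{SL}(2,\RR)$.

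The first alternative is ruled out at once by the hypothesis that $M$ is not Sasakian. To discard the second, I would invoke the dimension-three classification of algebraic Anosov flows: up to finite covers, any such flow is either a suspension of a hyperbolic automorphism of $\TT^2$ (carried by a torus bundle with $\mathrm{Sol}$-geometry) or is modeled on $\widetilde{SL}(2,\RR)$. In particular the $\widetilde{SL}(2,\RR)$-models are precisely the algebraic Anosov flows that are \emph{not} suspensions. Thus the second alternative of Theorem~\ref{T.main} would exhibit on $M$ an algebraic Anosov flow that is not a suspension, contradicting the second hypothesis. Since neither alternative survives, no contact structure on $M$ admits a critical compatible metric.

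The single step requiring care is the identification ``non-suspension algebraic Anosov flow $=$ flow modeled on $\widetilde{SL}(2,\RR)$,'' which is where the classification of algebraic Anosov flows enters and should be cited explicitly; everything else is a tautological reading of the main theorem. For the closing sentence I would simply point to Remark~\ref{detailsMain}: the $3$-manifolds admitting critical compatible metrics are exactly the Seifert-fibred ones carrying $\mathbb{S}^3$-, $\mathrm{Nil}^3$- or $\widetilde{SL}(2,\RR)$-geometry (the Sasakian case) together with the compact quotients of $\widetilde{SL}(2,\RR)$ (the Anosov case), so the complementary class described in the statement consists of the closed $3$-manifolds carrying one of the remaining Thurston geometries, such as $\mathbb{E}^3$ (e.g. $\TT^3$), $\mathbb{H}^3$, $\mathbb{H}^2\times\RR$, $\mathbb{S}^2\times\RR$, or $\mathrm{Sol}$.
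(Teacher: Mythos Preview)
Your proposal is correct and matches the paper's approach: the paper presents this corollary with no proof at all, calling it ``a first straightforward consequence of our main theorem,'' and your argument is precisely the contrapositive of Theorem~\ref{T.main}. Your extra care in unpacking why ``algebraic Anosov flow that is not a suspension'' coincides with ``algebraic Anosov flow modeled on $\widetilde{SL}(2,\RR)$'' is a useful clarification that the paper leaves implicit.
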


Since the 3-torus $\mathbb{T}^3$ cannot be endowed with a Sasakian metric~\cite{geigeSasa, itoh} and it is not diffeomorphic to a compact quotient of $\widetilde{SL}(2,\mathbb R)$, this corollary implies that the 3-torus is a concrete counterexample to the generalized Chern-Hamilton conjecture~\ref{conjectgeneral}.

\begin{example}
On $\mathbb T^3$ we have a family of contact structures given by the standard contact forms:
\begin{equation}\label{standardT3}
\eta_m = \sin(mx_3)\dif x_1 + \cos(mx_3)\dif x_2\,, \qquad m\in \ZZ\,,
\end{equation}
which correspond to the integer part of the curl spectrum with respect to the flat metric (i.e. $\ast \dif \eta_m = m \eta_m$). The rescaled form $\alpha = \frac{m}{2}\eta_m$ admits the flat metric $g_0=\frac{m^2}{4}(dx_1^2 + dx_2^2 + dx_3^2)$ as a compatible metric (with $\theta= 2$). Although this metric is somehow natural for the contact forms $\eta_m$, it turns out that it is not critical.
\end{example}

Another application of our main result shows that no ``chaotic'' Reeb flow (in the sense of positive topological entropy) on $\mathbb S^3$ admits a critical compatible metric:

\begin{corollary}
Let $R$ be the Reeb field of some contact form on $\mathbb S^3$. Then if $R$ has positive topological entropy, the corresponding contact form does not admit a critical compatible metric.
\end{corollary}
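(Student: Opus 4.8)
The statement is: If $R$ is the Reeb field of a contact form on $\mathbb{S}^3$ with positive topological entropy, then the contact form admits no critical compatible metric.

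**My proof plan:**

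The plan is to argue by contradiction, invoking Theorem~\ref{T.main} to severely constrain the possibilities on $\mathbb{S}^3$, and then showing each surviving possibility forces vanishing topological entropy.

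First I would suppose, for contradiction, that the contact form $\alpha$ on $\mathbb{S}^3$ admits a critical compatible metric $g$. By Theorem~\ref{T.main}, exactly one of two cases must hold: either $g$ is Sasakian, or the Reeb flow $R$ is $C^\infty$-conjugate to an algebraic Anosov flow modeled on $\widetilde{SL}(2,\mathbb{R})$ with $M$ diffeomorphic to a compact quotient of $\widetilde{SL}(2,\mathbb{R})$.

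Next I would eliminate the Anosov case on purely topological grounds. A compact quotient of $\widetilde{SL}(2,\mathbb{R})$ is never diffeomorphic to $\mathbb{S}^3$: such quotients are aspherical (they have contractible universal cover $\widetilde{SL}(2,\mathbb{R}) \cong \mathbb{R}^3$ and infinite fundamental group), whereas $\mathbb{S}^3$ is simply connected. Equivalently, $\mathbb{S}^3$ carries spherical geometry while quotients of $\widetilde{SL}(2,\mathbb{R})$ carry $\widetilde{SL}(2,\mathbb{R})$-geometry, and these Thurston geometries are distinct. Hence the second bullet of Theorem~\ref{T.main} is vacuous on $\mathbb{S}^3$, so any critical compatible metric on $\mathbb{S}^3$ must be Sasakian.

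Finally I would derive the contradiction from the Sasakian structure. In the Sasakian case $\mathcal{L}_R g = 0$, so $R$ is a Killing field for $g$; its flow $\varphi_t$ consists of isometries of the compact Riemannian manifold $(\mathbb{S}^3, g)$. The flow therefore lies in a compact subgroup of $\operatorname{Isom}(\mathbb{S}^3, g)$, so $\varphi_t$ is equicontinuous and the differentials $d\varphi_t$ are uniformly bounded in operator norm for all $t \in \mathbb{R}$. An isometric (hence equicontinuous) flow has zero topological entropy: the metric $d$ itself is a Bowen metric independent of time, so the $(t,\varepsilon)$-separated sets have cardinality bounded independently of $t$, giving $h_{\mathrm{top}}(R) = 0$. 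This contradicts the hypothesis that $R$ has positive topological entropy. The one point requiring a little care is making the equicontinuity-to-zero-entropy implication precise; but this is a standard fact (an isometry, and more generally any equicontinuous map, has vanishing topological entropy), so no genuine obstacle arises. This contradiction completes the proof.
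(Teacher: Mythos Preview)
Your proof is correct and follows essentially the same approach as the paper's own argument: invoke the dichotomy of Theorem~\ref{T.main}, rule out the Anosov alternative because $\mathbb{S}^3$ is not diffeomorphic to a compact quotient of $\widetilde{SL}(2,\mathbb{R})$, and then observe that in the Sasakian case $R$ is Killing, hence its flow consists of isometries and therefore has zero topological entropy. The paper simply cites \cite[Exercise~2.5.4]{brin} for the last step, whereas you sketch the equicontinuity argument directly; otherwise the two proofs coincide.
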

\begin{proof}
$R$ cannot be algebraic Anosov because $\mathbb S^3$ is not diffeomorphic to a compact quotient of $\widetilde{SL}(2,\mathbb R)$. If $R$ is a Reeb field of a Sasakian structure on $\mathbb S^3$, it is a Killing field of the corresponding compatible (Sasakian) metric. Since isometries (for some Riemannian metric) always have zero topological entropy~\cite[Exercise 2.5.4]{brin}, we finally conclude that $R$ must have zero topological entropy as well. The corollary follows from the dichotomy in Theorem~\ref{T.main}.
\end{proof}

Our final application shows a surprising connection between a global topological property of the contact structure (being overtwisted) and the existence of critical compatible metrics:

\begin{corollary}
Assume that $(M,\zeta)$ is a closed contact $3$-manifold that is overtwisted. Then it does not admit a critical compatible metric.
\end{corollary}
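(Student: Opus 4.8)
The plan is to combine Theorem~\ref{T.main} with the fundamental dichotomy between \emph{tight} and \emph{overtwisted} contact structures. The main theorem asserts that if $(M,\zeta)$ admits a critical compatible metric, then either $M$ supports a Sasakian structure, or the associated Reeb flow is ($C^\infty$-conjugate to) an algebraic Anosov flow modeled on $\widetilde{SL}(2,\mathbb R)$. The strategy is thus to argue by contradiction: assume $(M,\zeta)$ is overtwisted and admits a critical compatible metric, and then show that \emph{each} of the two alternatives forces $\zeta$ to be tight, contradicting overtwistedness.

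First I would handle the Sasakian case. A Sasakian metric has $\mathcal L_R g=0$, so $R$ is a Killing field; the contact structure is then $K$-contact, and the classification of Sasakian $3$-manifolds (Remark~\ref{detailsMain}, following~\cite{geigeSasa}) shows the contact form arises as a left-invariant structure on a Seifert-fibred manifold with $\SS^3$, $\mathrm{Nil}^3$ or $\widetilde{SL}(2,\mathbb R)$ geometry. Such homogeneous/Seifert contact structures are universally tight; more directly, one can invoke the result that a contact form admitting a Killing Reeb field (equivalently, a metric for which $R$ is Killing) is necessarily tight, as established in the literature relating geometric properties of compatible metrics to tightness (cf.~\cite{rafal,ekm,ekm2}). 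Hence $\zeta$ cannot be overtwisted in this case.

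Next I would handle the Anosov case. Here the Reeb flow is Anosov, and by Lemma~\ref{bi-contactsup} (and the analysis in Section~\ref{SS.dir}) it is supported by a calibrated bi-contact structure $(\eta_1,\eta_2)$ with $\eta_i\wedge d\eta_i$ of opposite orientations. The key point is that a Reeb-Anosov flow (equivalently, a contact form supported by a bi-contact structure in the sense above) forces the contact structure $\zeta=\ker\alpha$ to be tight: this is precisely the content of the known results of~\cite{mitsu,hozo,hozo1} connecting projectively/Anosov Reeb dynamics with tightness of the underlying contact structure. Alternatively, since $M$ is a compact quotient of $\widetilde{SL}(2,\mathbb R)$ carrying an Anosov Reeb flow, the contact structure is (universally) tight by the same classification. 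Either way, $\zeta$ is tight, again contradicting the overtwistedness assumption.

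The hard part, conceptually, is not any new computation but correctly citing or isolating the precise statement ``a contact form admitting a critical compatible metric has tight underlying contact structure'' in each of the two geometric regimes; the cleanest route is to observe that in \emph{both} cases $\zeta$ turns out to be tight (in the Anosov case via the bi-contact/Anosov--tightness correspondence, and in the Sasakian case via the Killing-Reeb/tightness correspondence), and that tightness is incompatible with being overtwisted by Eliashberg's dichotomy. Since both alternatives of Theorem~\ref{T.main} yield a tight $\zeta$, an overtwisted $(M,\zeta)$ can satisfy neither, so it admits no critical compatible metric, completing the proof.
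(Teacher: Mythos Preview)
Your proposal is correct and follows essentially the same approach as the paper: argue by contradiction, invoke the dichotomy of Theorem~\ref{T.main}, and show that each alternative forces $\zeta$ to be tight. The only difference is in the citations: the paper dispatches the Sasakian case in one line via~\cite{bel} (``Sasakian structures are always tight'') and the Anosov case via~\cite[Corollary~6.2]{perro} together with~\cite{hozo}, whereas you appeal to the more diffuse tightness literature~\cite{rafal,ekm,ekm2} and~\cite{mitsu,hozo,hozo1}; the logic is identical.
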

\begin{proof}
Assume that $(M,\alpha)$ admits a critical compatible metric $g$ for some defining contact form $\alpha$. It cannot be Sasakian because Sasakian structures are always tight~\cite{bel}, so according to Theorem~\ref{T.main}, the Reeb flow associated to $\alpha$ must be Anosov with positive constant $\lambda$. It then follows from~\cite[Corollary~6.2]{perro} that $(M,\zeta)$ is universally tight~\cite{hozo}, thus yielding a contradiction.
\end{proof}

\section*{Acknowledgements}

\noindent The authors are grateful to S.~Hozoori for clarifying to us the current status of the generalized Chern-Hamilton conjecture and for drawing our attention to Ref.~\cite{Green}. When preparing a final version of this article, he informed us in a private communication that he had obtained independently an equivalent result to our main theorem, which will be the content of a future article. We also thank to M.R.R.~Alves for hints regarding the topological entropy. This work has received funding from Grant-in-Aid JP21H00985 and Grant-in-Aid JP21K18579 (Y.M.) funded by JSPS KAKENHI, and the grants CEX2023-001347-S, RED2022-134301-T and PID2022-136795NB-I00 (D.P.-S.) funded by MCIN/AEI, and Ayudas Fundaci\'on BBVA a Proyectos de Investigaci\'on Cient\'ifica 2021 (D.P.-S.).

\bibliographystyle{amsplain}

\end{document}